\setlist{beginpenalty=100}
\newtheorem{thm}{Theorem}[section]
\newtheorem{lem}[thm]{Lemma}
\newtheorem{cor}[thm]{Corollary}
\newtheorem{conj}[thm]{Conjecture}
\theoremstyle{definition}
\newtheorem{de}[thm]{Definition}
\theoremstyle{remark}
\newtheorem{rem}[thm]{Remark}
\numberwithin{equation}{section}
\newcommand{\Rmnum}[1]{\expandafter\@slowromancap\romannumeral #1@}
\newcommand{\R}{\mathbb{R}}
\newcommand{\N}{\mathbb{N}}
\newcommand{\K}{\mathbb{K}}
\DeclareMathOperator{\SL}{SL}
\newcommand{\Z}{\mathbb{Z}}
\newcommand{\F}{\mathbb{F}}
\newcommand{\til}{\widetilde}
\DeclarePairedDelimiterX\set[1]{\lbrace}{\rbrace}{%

#1
}
\begin{document}

\title{Bounded diagonal orbits in homogeneous spaces over function fields}

\author{Qianlin Huang}
\address{Shanghai Center for Mathematical Sciences, Jiangwan Campus, Fudan University, No.2005 Songhu Road, Shanghai, 200438, China}
\email{qlhuang22@m.fudan.edu.cn}

\author{Ronggang Shi}
\address{Shanghai Center for Mathematical Sciences, Jiangwan Campus, Fudan University, No.2005 Songhu Road, Shanghai, 200438, China}
 \email{ronggang@fudan.edu.cn}
\thanks{NSF Shanghai 22ZR1406200, NSFC 12161141014
and the New Cornerstone Science Foundation.}

\subjclass[2010]{Primary 37B05; Secondary 37C85, 14M17}

\keywords{topological rigidity, homogeneous space,  function field}


\begin{abstract}
This paper is about topological rigidity of diagonal group actions on
the homogeneous space $\SL_4\big(\F(\!(t^{-1})\!)\big)/\SL_4(\F[t])$ where $\F$ is a finite field 
of characteristic $3,5,7$ or $11$. We show that there is a non-closed relatively compact orbit of
the diagonal group. 
\end{abstract}

\maketitle


\section{Introduction}\label{sec;introduction}
The rigidity of subgroup actions on homogeneous spaces has  a long history. One of the main challenges is the following conjecture  due to Cassels-Swinnerton-Dyer \cite{Cassels55} for $n=3$ and due to Margulis \cite{margulis2000} in general.

\begin{conj}
    Let $D$ be the diagonal group of $\SL_n( \R)$ where $n\ge 3$. Then 
any relatively compact $D$-orbit in $\SL_n(\R)/\SL_n(\Z)$ is closed. 
\end{conj}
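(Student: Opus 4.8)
This is a well-known open problem; indeed, already for $n = 3$ the conjecture implies Littlewood's conjecture, by the classical reduction of Cassels and Swinnerton-Dyer \cite{Cassels55}. What follows is therefore an outline of the approach through which the statement is expected to follow, together with the point at which the argument currently breaks down. Suppose $Dx \subseteq \SL_n(\R)/\SL_n(\Z)$ is relatively compact but \emph{not} closed, and set $X \df \overline{Dx}$, a compact $D$-invariant set with $X \neq Dx$. The plan is to derive a contradiction by classifying the dynamics on $X$. First I would apply a Krylov--Bogolyubov argument, averaging over an increasing sequence of boxes in $D \cong \R^{n-1}$, to produce a $D$-invariant Borel probability measure supported on $X$; passing to an ergodic component, and, by Zorn's lemma together with compactness, to a minimal $D$-invariant subset, it is enough to understand the $D$-ergodic probability measures $\mu$ supported on a minimal $D$-invariant subset of $X$ and then to translate the outcome back into a statement about the orbit $Dx$ itself.

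The heart of the matter is a measure classification theorem for the rank-$(n-1)$ abelian action of $D$. For $n \geq 3$ (the intended range: for $n = 2$ a bounded geodesic orbit with bounded but non-periodic continued fraction expansion is not closed) this is the subject of the Einsiedler--Katok--Lindenstrauss program, and one splits according to entropy. If $h_\mu(a) > 0$ for some $a \in D$, then, crucially using $n - 1 \geq 2$ so that a generic element of $D$ has several non-commuting coarse Lyapunov subgroups, the high-entropy and low-entropy methods together force $\mu$ to be invariant under a nontrivial unipotent subgroup, whereupon Ratner's measure classification makes $\mu$ the homogeneous measure on a closed orbit $Hx_0$ with $H$ reductive and normalized by $D$; since $Hx_0$ lies inside the compact set $X$ it is itself compact, a $\Q$-anisotropy argument then severely restricts $H$, and a linearization argument in the spirit of Dani--Margulis and Mozes--Shah shows that $Dx$ was already closed, a contradiction. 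The genuinely hard case --- and the reason the conjecture is still open --- is the opposite one: when $h_\mu(a) = 0$ for every $a \in D$, which is the case to which everything above reduces and which the conjecture itself predicts contains only the Haar measures on compact $D$-orbits, no unconditional rigidity is known, and this is the same zero-entropy obstruction that keeps Littlewood's conjecture open. Partial results, such as the Einsiedler--Katok--Lindenstrauss theorem that the set of exceptions to Littlewood has Hausdorff dimension zero, dispose of \emph{most} base points by supplying an auxiliary Diophantine input that forces positive entropy, but not all of them.

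The main obstacle I expect is therefore exactly the zero-entropy case of the measure classification; an unconditional proof would require an essentially new idea. This is consistent with the main theorem of the present paper, which shows that the literal analogue of the conjecture over the function field $\F(\!(t^{-1})\!)$ is false, so that any correct argument in the number-field setting must exploit a feature of $\SL_n(\R)/\SL_n(\Z)$ that has no counterpart over function fields.
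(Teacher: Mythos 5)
This statement is the Cassels--Swinnerton-Dyer/Margulis conjecture, which the paper states as an open conjecture and does not prove; your proposal correctly identifies it as open, accurately sketches the Einsiedler--Katok--Lindenstrauss entropy approach, and honestly locates the unresolved zero-entropy obstruction rather than claiming a proof. This is the right assessment, and your closing remark --- that the paper's main theorem shows the function-field analogue fails, so any proof must use something special to $\SL_n(\R)/\SL_n(\Z)$ --- is exactly the point of the paper.
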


One of the motivations of this conjecture is that it implies Littlewood conjecture. We refer the readers to Einsiedler-Katok-Lindenstrauss \cite{EKL} for the  progress in this direction.  The aim of this paper is to discuss rigidity of diagonal group actions on  homogeneous spaces over a positive characteristic local field.
Let 
$\K=\F(\!(t^{-1})\!)$ 
be  the field of  formal Laurent series over a finite   field $\F$ with odd characteristic $p$, and let $\F[t]\subset \K$ be the one variable polynomial ring over $\F$.
We endow $\K$ with an ultrametric absolute value $|\cdot|$ so that
$|bt^l|=p^l$ for $b\in \F\setminus\{ 0\}$ and $ l\in \Z$.
It induces a locally compact topology on $\K$ and hence on $\SL_n(\K) $.
For a commutative ring $R$ with identity, the group 
$\SL_n(R)$ consists of $n$-by-$n$ matrices with entries in $R$ and determinant $1$. In this paper $0$ and $1$ denote the zero and one in a ring according to 
the context.
The following is our main result.
\begin{thm}
    \label{thm;main}
    Let $D$ be the diagonal subgroup of $\SL_4(\K)$, where $\K$ is of characteristic $3,5,7$ or $11$. Then there exists $z\in 
   \SL_4(\K)/\SL_4(\F[t]) $ such that $D z$ is a  non-closed  relatively compact  subset. 
\end{thm}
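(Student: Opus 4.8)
The plan is to realise $z$ so that its $D$-orbit is permanently trapped in a fixed compact set while nevertheless accumulating on a proper closed $D$-invariant subset that does not contain $z$; relative compactness will be the entire difficulty, and it is there that characteristic $3$ must be used essentially.

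First I would produce the set on which the orbit is to accumulate. Compact $D$-orbits in $Y\df\SL_4(\K)/\SL_4(\F[t])$ do exist: if $E/\F(t)$ is a quartic extension in which the place at infinity splits completely — and such $E$ can be written down explicitly over $\F$, e.g.\ as a compositum of two quadratic extensions each already split at infinity — then, choosing $g_\infty\in\SL_4(\K)$ with $g_\infty\,\F[t]^4=\O_E$, the stabiliser $\Stab_D(g_\infty\SL_4(\F[t]))$ contains the norm-one unit group $\O_E^{*,1}$, which by the Dirichlet unit theorem for function fields has rank $3$ and is hence cocompact in $D$. So $z_\infty\df g_\infty\SL_4(\F[t])$ has compact $D$-orbit; fix a compact neighbourhood $K_0$ of $Dz_\infty$.

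Next I would take $z=hz_\infty$ for a suitable $h\in\SL_4(\K)$ close to, but not in, the identity: $h$ should lie in the contracting horospherical subgroup of some $a\in D$ and be generic enough that $z\notin Dz_\infty$. Then $a^{-n}ha^{n}\to e$, and for generic $a$ the points $a^{-n}z_\infty$ run densely through $Dz_\infty$; since $a^{-n}hz_\infty=(a^{-n}ha^{n})(a^{-n}z_\infty)$ has the same limit set as $a^{-n}z_\infty$, we obtain $\overline{Dz}\supseteq Dz_\infty$ while $z\notin Dz_\infty$, so $Dz$ is not closed.

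The main obstacle is to show that $Dz$ is relatively compact at all: a priori the directions of $D$ transverse to $a$ enlarge $h$ into a big unipotent which, applied to the uniformly non-degenerate lattices $d\,\O_E$, can create arbitrarily short vectors — this is exactly why the archimedean analogue is expected to fail. The role of characteristic $3$ is to block this escape. Concretely, I would exploit the Frobenius self-similarity of the problem: cubing identifies $\SL_4(\K)$ with $\SL_4(\F(\!(t^{-3})\!))$, carries $\SL_4(\F[t])$ onto $\SL_4(\F[t^3])$ and $D$ onto $D\cap\SL_4(\F(\!(t^{-3})\!))$, so the $D$-action on $Y$ is isomorphic to the action of the proper closed subgroup $D\cap\SL_4(\F(\!(t^{-3})\!))$ on a closed "thin copy" of $Y$ inside $Y$. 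I would then choose $E$, the direction $a$, and the perturbation $h$ compatibly with the $\F[t^3]$-module structure of $\F[t]^4$, so that after applying this identification the enlarged unipotents $dhd^{-1}$ range over a bounded family modulo the integral points, while the residual one-parameter family of directions is handled by taking $h$ badly approximable in the classical rank-one sense. Turning this into a genuine uniform positive lower bound for the systoles of $d\,g\,\F[t]^4$ over all $d\in D$ — reducing the infinitely many directions of $D$ to a compact verification via the self-similarity and patching in the badly-approximable choice — is the technical heart of the argument, and I expect essentially all the work to be there.
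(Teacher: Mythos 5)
Your construction of the compact orbit $Dz_\infty$ from a quartic extension totally split at infinity, and the perturbation $z=hz_\infty$ by a horospherical element to force $\overline{Dz}\supseteq Dz_\infty$ with $Dz\cap Dz_\infty=\emptyset$, are both sound and standard; non-closedness is not where the difficulty lies, and you correctly identify relative compactness of $Dz$ as the entire content of the theorem. But at that point your argument stops being a proof. Boundedness of the full rank-$3$ orbit $Dz$ amounts to a uniform positive lower bound on a product of four linear forms evaluated on $h\,\mathcal{O}_E\setminus\{0\}$ over all of $D$; this is a genuinely higher-rank Littlewood/Cassels--Swinnerton-Dyer type condition and cannot be split into ``a bounded family modulo integral points'' in most directions plus ``badly approximable in the classical rank-one sense'' in the remaining one --- the directions of $D$ transverse to your chosen $a$ conjugate $h$ to unboundedly large unipotents, and rank-one bad approximability of $h$ gives no control over the resulting lattices. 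Likewise, entrywise Frobenius $x\mapsto x^3$ does give an embedding of $\SL_4(\K)$ onto $\SL_4\big(\F(\!(t^{-3})\!)\big)$ carrying $\SL_4(\F[t])$ into $\SL_4(\F[t^3])$, but this does not intertwine the full diagonal action with the action of a cocompact piece on a ``thin copy,'' and no finite ``compact verification'' falls out of it. In characteristic $0$ the analogous statement is expected to be \emph{false} (this is the Cassels--Swinnerton-Dyer/Margulis conjecture), so any correct argument must isolate a specific positive-characteristic input; a soft self-similarity heuristic cannot suffice.

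The paper supplies exactly that missing input and then sidesteps the need to verify boundedness of a full $D$-orbit directly. It imports the Adiceam--Nesharim--Lunnon counterexample to the $t$-adic Littlewood conjecture over $\F_3$ (this, not a Frobenius trick, is where characteristic $3$ enters), which yields boundedness only of a \emph{cone} trajectory $A^+u(\alpha,0)\Gamma'$ on $\SL_2(\K\times\F(t)_{\mathfrak p})/\SL_2(\F[t,t^{-1}])$ via Mahler's criterion; transcendence of $\alpha$ (proved via Christol's theorem) guarantees the trajectory is not periodic. The two places of degree one are then identified so as to replace $\F(t)_{\mathfrak p}$ by $\K$ and the lattice by the order $\F[t,t\beta]$ in a quadratic extension split at infinity. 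Boundedness of full $A$-orbits is then obtained for free: the $\omega$-limit set $\mathcal L$ of the cone trajectory is compact and $A$-invariant, and a perturbation argument combined with the countability of compact $A$-orbits (which follows from algebraicity of their base points) produces an uncountable family of points of $\mathcal L$, at least one of which has a non-closed orbit. Only at the very end is restriction of scalars used to land in $\SL_4(\K)/\SL_4(\F[t])$. To repair your approach you would need either to import the same Diophantine counterexample or to prove an equivalent one; as written, the relative compactness step is a genuine gap.
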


A relatively compact subset of a homogeneous space is said to be bounded. 
Our proof of Theorem \ref{thm;main} is based on the counter examples of $t$-adic Littlewood conjecture constructed  in Adiceam-Nesharim-Lunnon \cite{Erez} and Garrett-Robertson \cite{garrett2025counterexamplesptadiclittlewoodconjecture}. Each counter example corresponds to a 
bounded trajectory of a diagonal cone on 
$$G'/\Gamma'=\SL_2(\K\times \F(t)_{\mathfrak p})/ \SL_2(\F[t, t^{-1}])$$ where $\F(t)_\mathfrak p=\F(\!(t)\!)$ is the completion of 
$\F(t)$ with respect to the absolute value  $|t |_{\mathfrak p}= p^{-1}$ corresponding to the prime ideal $\mathfrak p= t \F[t]$ of $\F[t]$ and $\F[t, t^{-1}]$ embedding diagonally in $\K\times \F(t)_{\mathfrak p}$.

In the case of real numbers, the completion with respect to the usual absolute value is quite different from that with respect to the $p$-adic absolute value, whereas in the positive characteristic case, all the absolute values of degree one of the field  $\F(t)$ are essentially the same.
Using this observation, we show that there is a bounded trajectory of a diagonal cone
 on $$G/\Gamma=\SL_2(\K\times \K)/ \SL_2(\F[t, \sqrt {t+t^2}]).$$ 
 Then we use dynamical arguments on limit points of the cone  to show that there must be a non-closed bounded  orbit 
 of the diagonal group on $G/\Gamma$. We finally obtain a non-compact 
 bounded $D$-orbit in Theorem \ref{thm;main} by pushing the one on
  $G/\Gamma$   into $\SL_4(\K)/\SL_4(\F[t])$ via a naturally defined proper embedding constructed  by restriction of scalars. 

We remark here that  besides topological rigidity,   a measure rigidity of diagonal group action on positive characteristic homogeneous spaces is obtained by Einsiedler-Lindenstrauss-Mohammadi \cite{ELM}.

  \subsection*{Acknowledgments}
The authors would like to thank Jialun Li for the discussions and  Noy Soffer Aranov for drawing our attention to \cite{garrett2025counterexamplesptadiclittlewoodconjecture} and  comments to a preliminary version of the paper.

\section{Dynamical Interpretation of $t$-adic Littlewood}
Let 
$\phi: \F(t)\to \K \times \F(t)_{\mathfrak p} 
$
be the embedding defined by $\phi(\alpha)= (\alpha, \alpha)$ for $\alpha\in \F(t)$. 
The group   $\Gamma'=\SL_2\big(\phi(\F[t, t^{-1}])\big)$ is a discrete subgroup of
$G'$.
For $\alpha\in R$, where $R$ is a commutative ring with identity, 
let 
\begin{align}\label{eq;notation}
u(\alpha)=\begin{pmatrix}
    1 & \alpha \\
    0 & 1
\end{pmatrix} \mbox{ and } a(\alpha)=\begin{pmatrix}
    \alpha & 0\\
    0 & \alpha^{-1}
\end{pmatrix}\mbox{ if }\alpha \mbox{ is invertible}.
\end{align}
For example,  $R$ could be $\K \times \F(t)_\mathfrak p$ or $\K \times \K$, 
in which case 
if  $(\alpha_1,\alpha_2)\in  R$, we write $u(\alpha_1, \alpha_2) $ or 
$a(\alpha_1, \alpha_2)$ in the notation (\ref{eq;notation}).
Let $A_{\infty, \mathfrak p}$ be the diagonal group of $G'
$  and 
let 
\[
A^+_{\infty, \mathfrak p}=
\left\{
a(\alpha_1, \alpha_2): (\alpha_1,\alpha_2)\in \K \times \F(t)_{\mathfrak p},  |\alpha_1|\ge |\alpha_2|_{\mathfrak p}\ge 1  \right\}.
\]
The aim of this section is to prove the following lemma.
\begin{lem}\label{lem;trajectory}
There exists an $\alpha\in \K$ 
transcendental over $\F(t)$ 
such that 
the trajectory $A^+_{\infty, \mathfrak p}u(\alpha, 0)\Gamma'$ 
is bounded in $G'/\Gamma'$.
\end{lem}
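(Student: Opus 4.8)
The strategy is to translate the counterexample to the $t$-adic Littlewood conjecture from Adiceam–Nesharim–Lunnon \cite{Erez} into a statement about a bounded trajectory, via the standard dynamical dictionary. Recall that the $t$-adic Littlewood conjecture concerns whether $\inf_{q\in\F[t], q\ne 0} |q|\cdot \|q\alpha\|_\infty\cdot |q\alpha|_{\mathfrak p} = 0$ for all Laurent series $\alpha$, where $\|\cdot\|_\infty$ denotes distance to $\F[t]$ in the $\infty$-adic norm; the paper \cite{Erez} exhibits an explicit $\alpha\in\F((t^{-1}))$ (for $\F$ of characteristic $3$), transcendental over $\F(t)$, for which this infimum is bounded below by a positive constant. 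The first step is to record precisely this fact and fix such an $\alpha$.

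The second step is the dynamical reformulation. Consider the orbit point $u(\alpha,0)\Gamma'$ and act by $a(\alpha_1,\alpha_2)\in A^+_{\infty,\mathfrak p}$. Conjugating, $a(\alpha_1,\alpha_2)u(\alpha,0) = u(\alpha_1^2\alpha, 0)a(\alpha_1,\alpha_2)$ (the second coordinate of $u$ being $0$ is preserved). The orbit $A^+_{\infty,\mathfrak p}u(\alpha,0)\Gamma'$ fails to be bounded precisely when there is a sequence $a_n\in A^+_{\infty,\mathfrak p}$ with $a_nu(\alpha,0)\Gamma'$ leaving every compact set, i.e.\ (by Mahler-type compactness for $\SL_2$ over the two-place adele ring, applied to the lattice $\Gamma'$) when one can find nonzero $\gamma=\phi(q,p)$-type column vectors in the lattice $\Z^2$-image that become arbitrarily short simultaneously at both places. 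Writing $a_n = a(\alpha_1^{(n)},\alpha_2^{(n)})$ and tracking the action on the primitive vector coming from $\begin{pmatrix} q\\ -p\end{pmatrix}\in\F[t,t^{-1}]^2$ pushed by $u(\alpha,0)$, one finds that shortness at the $\infty$-place forces $|\alpha_1|^{-1}\cdot$ (size of $q$) and $|\alpha_1|\cdot\|q\alpha-p\|_\infty$ both small, while shortness at $\mathfrak p$ forces $|\alpha_2|_{\mathfrak p}\cdot|q|_{\mathfrak p}$ and $|\alpha_2|_{\mathfrak p}^{-1}\cdot|p|_{\mathfrak p}$ small; using the constraint $|\alpha_1|\ge|\alpha_2|_{\mathfrak p}\ge 1$ and optimizing, the product $|q|_\infty\cdot\|q\alpha\|_\infty\cdot|q\alpha|_{\mathfrak p}$ would have to tend to $0$. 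This contradicts the choice of $\alpha$. Hence the orbit is bounded.

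The main obstacle — and the part that needs careful bookkeeping — is making the Mahler compactness criterion precise in this two-place $S$-arithmetic setting and matching up the exponents of $\alpha_1$, $\alpha_2$ with the three quantities $|q|$, $\|q\alpha\|_\infty$, $|q\alpha|_{\mathfrak p}$ so that the cone condition $|\alpha_1|\ge|\alpha_2|_{\mathfrak p}\ge 1$ translates exactly into the region where a positive lower bound on the Littlewood product guarantees non-escape of mass. One has to check that \emph{every} short vector in the lattice (not just those of a special form) is accounted for, which amounts to noting that short vectors are primitive and, after clearing denominators, correspond to genuine pairs $(q,p)$ with $q\in\F[t,t^{-1}]$; the powers of $t$ in the $t^{-1}$-localization are absorbed harmlessly because $|t^{-1}|_\infty\cdot|t^{-1}|_{\mathfrak p}$-type products are controlled. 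Once the dictionary is set up, the transcendence of $\alpha$ (needed so that $q\alpha-p$ never vanishes and the product is genuinely positive, not merely nonzero along a subsequence) and the explicit positive lower bound from \cite{Erez} close the argument. I would therefore organize the proof as: (i) quote the counterexample; (ii) state and prove the compactness criterion lemma for $G'/\Gamma'$; (iii) do the exponent matching and derive the contradiction.
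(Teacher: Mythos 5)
Your overall strategy---quote the Adiceam--Nesharim--Lunnon counterexample and convert boundedness of the trajectory into a lower bound on a Littlewood-type product via Mahler's compactness criterion---is exactly the strategy of the paper, so at that level the proposal is on target. The paper, however, avoids the two-place bookkeeping you flag as the main obstacle by a small trick you should note: since $a(t^n,t^n)\in\Gamma'$ (because $t$ is a unit of $\F[t,t^{-1}]$), one can write $a(t^m,t^{-n})u(\alpha^{-1},0)\Gamma'=a(t^{m+n},1)u(t^{-2n}\alpha^{-1},0)\Gamma'$ and then quotient by the compact group $\SL_2(\F[\![t]\!])$, reducing everything to the single place $\infty$, i.e.\ to ordinary Mahler compactness on $\SL_2(\K)/\SL_2(\F[t])$. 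This makes the exponent matching a short case analysis rather than a genuine $S$-arithmetic argument.

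Two concrete gaps remain in what you wrote. First, the quantity you attribute to \cite{Erez}, $|q|\cdot\|q\alpha\|_\infty\cdot|q\alpha|_{\mathfrak p}$, is not the statement you can cite: $|\cdot|_{\mathfrak p}$ is not even defined on all of $\K=\F(\!(t^{-1})\!)$, and the actual theorem bounds $\inf_{N\neq 0,\,k\ge 0}|N|\cdot|\langle Nt^k\alpha\rangle|$ from below. Since the entire proof consists of matching the Mahler criterion to precisely this expression (the $t^{2n}$ produced by the reduction above becomes the $t^k$ in the theorem, with $N=P$), the ``optimization'' step you defer is exactly where the proof lives, and as stated it would not connect to the cited result. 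Second, the lemma asserts transcendence of $\alpha$, and this does not follow from the Diophantine property (in positive characteristic there exist algebraic badly approximable power series, so no free lunch here); the paper proves transcendence separately via Christol's theorem, by showing the coefficient sequence $\{f_n\}$ is not eventually periodic and hence not $3$-automatic. You treat transcendence as part of the quotation from \cite{Erez}, which at minimum needs justification and more likely needs its own argument.
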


The main tools to prove Lemma \ref{lem;trajectory} are \cite{Erez} and \cite{garrett2025counterexamplesptadiclittlewoodconjecture}. To state their results we 
recall some definitions. 
For $\alpha=\sum_{n=l}^\infty b_nt^{-n}\in \K $, the
fractional part of $\alpha$ is
\[
\langle \alpha \rangle = \sum_{n=1}^\infty b_nt^{-n}.
\]
For integers $m$ and $n\ge 1$ we let  $\lfloor m \mod n \rfloor$ to be the smallest nonnegative 
integer $k$ with $m\equiv k \mod n$. 
\begin{de}\label{de;paper}
   For $m\ge 1$, the $m$th level paperfolding sequence $\{f_n\}_{n\ge 1}$ is 
\begin{align}\label{eq;fn}
     f_n=\frac{1}{2}\left\lfloor {k-1 \mod 2^{m+1}} \right\rfloor,
\end{align}
where  $k$ be the odd part of the unique decomposition of $n$ into a product of a power
of $2$ and an odd integer, i.e., $n= 2^l
\cdot k$.
\end{de}

There is a uniquely and naturally defined ring homomorphism from $\Z\to \F_p\subset \F$.
 We will identify $f\in \Z$ with its image in $\F$ according to the context. 

\begin{thm}\cite[Theorem 1.1]{Erez}\label{counterexample}\cite[Theorems 1.5, 1.9]{garrett2025counterexamplesptadiclittlewoodconjecture}
Suppose  the characteristic of $\F$ is $p=3,5,7$ or $11$.
Let $m$ be the maximal positive integer such that $2^{m}|p-1$.
Let  $\alpha=\sum_{n=1}^\infty f_nt^{-n}\in \K $ where  $\{f_n\}_{n\ge 1}$ is the  $m$th paperfolding sequence. 
Then we have 
    \[
    \inf_{0\neq N\in \F[t], k\ge 0} |N|\cdot  |\langle Nt^k\alpha\rangle | >0.
    \]
\end{thm}

\begin{rem}
    In fact, Garrett and Robertson gave a further conjecture \cite[Conjecture 1.7]{garrett2025counterexamplesptadiclittlewoodconjecture}, extending the one given by Adiceam-Nesharim-Lunnon \cite[Conjecture 7.3]{Erez}, that Theorem \ref{counterexample} holds for any odd characteristic finite field $\F$. Our proof of Theorem \ref{thm;main}
    works for any odd characteristic $\F$ for which this conjecture holds.
\end{rem}

\begin{lem}\label{transcendence}
    The element $\alpha\in\K$ in Theorem \ref{counterexample} is transcendental over $\F(t)$ for any finite field $\F$ of odd characteristic.
\end{lem}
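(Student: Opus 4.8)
The plan is to prove transcendence of $\alpha$ by exploiting the strong Diophantine property in Theorem~\ref{counterexample} together with the standard fact that algebraic Laurent series over $\F(t)$ are badly approximable in a uniform sense. Recall Liouville's inequality in the function field setting: if $\alpha\in\K$ is algebraic over $\F(t)$ of degree $d\geq 2$, then there is a constant $c=c(\alpha)>0$ such that $|\alpha-P/Q|\geq c\,|Q|^{-d}$ for all $P/Q\in\F(t)$; more to the point here, one has a lower bound of the shape $|Q|\cdot|\langle Q\alpha\rangle|\geq c\,|Q|^{-(d-1)}$, which tends to $0$ as $\deg Q\to\infty$ as soon as $d\geq 2$. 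Thus I would argue by contradiction: assuming $\alpha$ algebraic of degree $d\geq 2$, I produce a sequence of polynomials $N_j\in\F[t]$ with $|N_j|\to\infty$ and $|N_j|\cdot|\langle N_j\alpha\rangle|\to 0$, directly contradicting the positive lower bound $2^{-4}$ in Theorem~\ref{counterexample} (applied with $k=0$, which is allowed).

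The mechanism producing such $N_j$ is the key point, and it is the one place where the characteristic $3$ (or rather, positive characteristic) and the Frobenius are essential. The sequence $(f_n)$ defined in \eqref{eq;fn} is $2$-automatic: it is a fixed point of a substitution on the $2$-adic digit structure of $n$, since $f_{2n}=f_n$ and $f_{4n+1}=0$, $f_{4n+3}=1$. Consequently the generating series $\alpha=\sum f_n t^{-n}$ satisfies, by Christol's theorem, an algebraic equation over $\F_2(t)$; but that is not the field we are working over. The honest route is direct: the relation $f_{2n}=f_n$ translates into a functional equation relating $\alpha$ and its Frobenius twist. Concretely, write $\alpha=\alpha_{\mathrm{even}}+\alpha_{\mathrm{odd}}$ where $\alpha_{\mathrm{even}}=\sum_{n}f_{2n}t^{-2n}=\sum_n f_n t^{-2n}$, which in characteristic $3$... — wait, the relevant Frobenius is the $p=3$ power map on coefficients, so one should instead observe that $\alpha_{\mathrm{even}}(t)=\beta(t^2)$ where $\beta=\sum f_n t^{-n}=\alpha$, i.e. $\alpha_{\mathrm{even}}$ is obtained from $\alpha$ by $t\mapsto t^2$, not by Frobenius. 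The clean statement is therefore: $\alpha$ satisfies a functional equation of the form $\alpha(t)=\alpha(t^2)+g(t)$ where $g(t)=\alpha_{\mathrm{odd}}(t)=\sum_{n\text{ odd}}f_n t^{-n}=\sum_{m\geq 0}t^{-(4m+3)}$, and this last series is a rational function of $t$, namely $g(t)=t^{-3}/(1-t^{-4})=t/(t^4-1)$. So $\alpha(t)-\alpha(t^2)=t/(t^4-1)\in\F(t)$.

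From the functional equation $\alpha(t)=\alpha(t^2)+r(t)$ with $r\in\F(t)$ one concludes transcendence cleanly. First I would record that $\alpha\notin\F(t)$ directly: a rational function has eventually periodic Laurent coefficients, whereas $(f_n)$ is not eventually periodic (standard for automatic-but-not-periodic sequences; e.g. look at blocks $f_{2^l},f_{2^l\cdot3}$ which force $0,1$ at scale $2^l$, incompatible with any fixed period). Now suppose $\alpha$ were algebraic over $\F(t)$, say with minimal polynomial $\sum_{i=0}^{d}c_i(t)X^i$, $c_i\in\F[t]$, $d\geq 1$. Applying the field automorphism $t\mapsto t^2$ of $\F(t)$ (and its extension to $\K$, or rather to $\F(\!(t^{-1})\!)$), $\alpha(t^2)$ is algebraic over $\F(t^2)\subset\F(t)$ with the conjugate minimal polynomial; combined with $\alpha(t^2)=\alpha(t)-r(t)$, we get that $\alpha$ satisfies $\sum_i c_i(t^2)(X-r(t))^i=0$ as well. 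Iterating, $\alpha$ satisfies $\sum_i c_i(t^{2^k})(X-r_k(t))^i=0$ for every $k\geq 1$, where $r_k=r(t)+r(t^2)+\cdots+r(t^{2^{k-1}})$. If $\alpha$ has degree exactly $d$, all these degree-$\leq d$ polynomials must be scalar multiples of the minimal polynomial, which (comparing leading behaviour and the explicit shifts $r_k$) forces strong divisibility constraints on the $c_i$ that cannot hold for all $k$ unless $d=0$, i.e. $\alpha\in\F(t)$ — contradiction. Thus $\alpha$ is transcendental. The main obstacle I anticipate is making this last descent argument fully rigorous and quantitative: one must either invoke Christol's theorem together with a base-change argument showing that algebraicity over $\F(t)$ would force algebraicity over $\F_p(t)$ with the wrong support properties, or carry out the Mahler-type functional-equation argument carefully, tracking degrees of the $c_i(t^{2^k})$ and the denominators of $r_k$. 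In the write-up I would most likely prefer the Mahler functional-equation route since the equation $\alpha(t)-\alpha(t^2)\in\F(t)$ is so explicit, and transcendence of solutions of $\alpha(t)=\alpha(t^2)+r(t)$ with non-eventually-periodic coefficient sequence is a clean, self-contained lemma.
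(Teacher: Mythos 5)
Your route is genuinely different from the paper's (the paper reduces to $\F_3$, invokes Christol's theorem to conclude that algebraicity would make $(f_n)$ a $3$-automatic sequence, and then derives a contradiction with non-periodicity by evaluating along $n\mapsto 3^n-1$ — in effect a hands-on instance of Cobham's theorem, since $(f_n)$ is visibly $2$-automatic). Two parts of your proposal are sound: the functional equation $\alpha(t)=\alpha(t^2)+t/(t^4-1)$ is correctly derived from $f_{2n}=f_n$, $f_{4m+1}=0$, $f_{4m+3}=1$, and the observation that $(f_n)$ is not eventually periodic (so $\alpha\notin\F(t)$) is correct and matches the paper's Lemma on non-periodicity. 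Your opening Diophantine paragraph, however, is a dead end: Liouville gives a \emph{lower} bound, and in positive characteristic there exist algebraic power series of degree $\ge 2$ that are badly approximable, so algebraicity does not by itself produce polynomials $N_j$ with $|N_j|\,|\langle N_j\alpha\rangle|\to 0$. (Nor does the functional equation: the rational approximations $r_k=\sum_{j<k}r(t^{2^j})$ satisfy $|\alpha-r_k|\approx 3^{-3\cdot 2^k}$ with denominator of degree $4(2^k-1)$, so $|Q_k|\,|\langle Q_k\alpha\rangle|\to\infty$.)

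The genuine gap is the final step: the assertion that an irrational solution of $\alpha(t)=\alpha(t^2)+r(t)$ with $r\in\F(t)$ must be transcendental is not a ``clean, self-contained lemma'' in positive characteristic, and your descent sketch does not prove it. The statement is actually \emph{false} when the Mahler radix equals the characteristic: in characteristic $p$ the series $f(z)=\sum_{n\ge 0}z^{-p^n}$ satisfies $f(z)=f(z^p)+z^{-1}$ and also $f^p-f+z^{-1}=0$, so it is algebraic of degree $p$ and irrational. Hence any correct argument must use in an essential way that the radix $2$ is coprime to the characteristic $3$; your descent (``all the conjugate polynomials $\sum_i c_i(t^{2^k})(X-r_k)^i$ are scalar multiples of the minimal polynomial, which forces impossible divisibility constraints'') never invokes this coprimality and would apply verbatim to the counterexample above, so as written it cannot be completed. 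Moreover the comparison of coefficients you allude to is delicate in characteristic $3$ (terms like $d\cdot r(t)c_d(t^2)$ can vanish, and $\alpha$ could a priori be inseparable over $\F(t)$). To repair the argument you would either have to carry out a genuine positive-characteristic Mahler-method proof for radix coprime to $p$, or — more efficiently — pass through automata: your functional equation shows $(f_n)$ is $2$-automatic, Christol shows algebraicity would make it $3$-automatic, and Cobham's theorem (or the paper's explicit computation of $f_{3^n-1}$) then forces eventual periodicity, a contradiction. That last route is exactly the paper's proof.
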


To prove the transcendence, we need to introduce the definition of $q$-automatic as well as  Christol's Theorem. The proof can  be found in  \cite[Theorem 12.2.5]{2003Automatic}.

\begin{de}\label{def;auto}
Let \( q \geq 2 \) be a prime power. A sequence \( \{f_n\}_{n \geq 0} \) with values in $\F_q$ is called \textbf{\( q \)-automatic} if there exists a $4$-tuple \( (S, s_0, \mu, \Phi) \) such that:
\begin{itemize}
    \item \( S \) is a finite set,
    \item \( s_0 \in S \) is a selected element,
    \item \(\mu= \{\zeta_0,\zeta_1,\ldots,\zeta_{q-1}  \}  \) consists of maps from $S$ to $S$,
    \item \( \Phi: S \to \F_q \) is a function,
\end{itemize}
and for every \( n \geq 0 \), if \( n = \sum_{k=0}^m d_k q^k \) with \( d_k \in \{0,1,\ldots,q-1  \} \) and $d_m\neq 0$ (except $n=0=0\cdot q^0$), then
\begin{align}\label{eq;auto}
    f_n = \Phi\left(\zeta_{d_m}\circ \zeta_{d_{m-1}}\circ \cdots \circ \zeta_{d_0} (s_0)\right).
\end{align}

\end{de}

\begin{thm}[Christol's Theorem]
Let \( \mathbb{F}_q \) be a finite field, and let \( \{ f_n \}_{n \geq 0} \) be a sequence with values in \( \mathbb{F}_q \). Then the sequence \( \{f_n\} \) is \( q \)-automatic if and only if the formal power series
\[
\sum_{n=0}^\infty f_n t^{-n}
\]
is algebraic over \( \mathbb{F}_q(t) \).
\end{thm}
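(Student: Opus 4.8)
The plan is to prove Christol's theorem by reformulating $q$-automaticity through the $q$-kernel and the associated section operators. Put $X=t^{-1}$, so $\F_q(t)=\F_q(X)$, the series in question is $f=\sum_{n\ge0}f_nX^n\in\F_q[[X]]$, and we must show that $(f_n)_{n\ge0}$ is $q$-automatic if and only if $f$ is algebraic over $K:=\F_q(X)$. For $0\le r<q$ define $\Lambda_r\colon\F_q(\!(X)\!)\to\F_q(\!(X)\!)$ by $\Lambda_r(\sum_n a_nX^n)=\sum_n a_{qn+r}X^n$. Since $\F_q$ is perfect, substituting $X\mapsto X^q$ in a series with $\F_q$-coefficients agrees with raising it to the $q$-th power, so every $g$ obeys the identity $g=\sum_{r=0}^{q-1}X^r(\Lambda_rg)^q$. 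Reading base-$q$ expansions of indices identifies $\mathcal{O}(f):=\{\Lambda_{r_k}\circ\cdots\circ\Lambda_{r_1}(f):k\ge0,\ 0\le r_i<q\}$ with the $q$-kernel of $f$, and shows that $(f_n)$ is $q$-automatic if and only if $\mathcal{O}(f)$ is finite: from a finite $\mathcal{O}(f)$ one builds a $4$-tuple as in Definition~\ref{def;auto} with $S=\mathcal{O}(f)$, $s_0=f$, $\zeta_r=\Lambda_r$ and $\Phi(g)$ the constant term of $g$, and any automaton computing $(f_n)$ surjects onto $\mathcal{O}(f)$.

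For ``$q$-automatic $\Rightarrow$ algebraic'' I would write $\mathcal{O}(f)=\{g_1,\dots,g_N\}$ with $g_1=f$, so $\Lambda_rg_i=g_{\sigma(i,r)}$ for a map $\sigma$, and the identity above reads $g_i=\sum_{r=0}^{q-1}X^rg_{\sigma(i,r)}^q$ for each $i$. Let $V=\spa_K(g_1,\dots,g_N)\subseteq\F_q(\!(X)\!)$ and $d=\dim_KV$. Expressing the $g_j$ in terms of a $K$-basis chosen among them and raising those relations to the $q$-th power shows $\dim_K\spa_K(g_1^q,\dots,g_N^q)\le d$, while the displayed identities give $g_i\in\spa_K(g_1^q,\dots,g_N^q)$; hence $V=\spa_K(g_1^q,\dots,g_N^q)$ and in particular $g\mapsto g^q$ maps $V$ into itself. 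Therefore $f,f^q,\dots,f^{q^d}$ lie in the $d$-dimensional space $V$, so some nontrivial relation $\sum_{i=0}^d a_if^{q^i}=0$ with $a_i\in K$ holds, exhibiting $f$ as a root of the nonzero additive polynomial $\sum_{i=0}^d a_iY^{q^i}$; thus $f$ is algebraic over $K=\F_q(t)$.

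For the converse ``algebraic $\Rightarrow$ $q$-automatic'', the substantial half, I would first observe that an $f\in\F_q(\!(X)\!)$ algebraic over $K$ is automatically \emph{separable} over $K$: the $p$-th powers in $\F_q(\!(X)\!)$ are exactly the Laurent series in $X^p$ and so do not include $X$, hence $Y^p-X$ is irreducible over $\F_q(\!(X)\!)$; this makes $\F_q(\!(X)\!)$ linearly disjoint from $K^{1/p}$ over $K$, hence a separable extension of $K$, so all of its algebraic elements are separable over $K$. Thus $L:=K(f)$ is a finite separable extension of $K$; since $[K:K^q]=q$ with $\{1,X,\dots,X^{q-1}\}$ a basis of $K$ over $K^q$, separability forces $L=L^q(X)$ and hence the direct-sum decomposition $L=\bigoplus_{r=0}^{q-1}X^rL^q$. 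Comparing this with $g=\sum_rX^r(\Lambda_rg)^q$ inside $\F_q(\!(X)\!)$ and using uniqueness of such a decomposition yields $\Lambda_r(L)\subseteq L$ for all $r$, so $\mathcal{O}(f)\subseteq L$ and $\spa_K\mathcal{O}(f)$ is finite-dimensional. It remains to upgrade finite-dimensionality of this span to finiteness of the set $\mathcal{O}(f)$ itself; here one uses that the $f_n$ lie in the finite set $\F_q$ together with the elementary observation that the $\Lambda_r$ do not raise pole orders beyond a fixed bound, so a pigeonhole argument traps $\mathcal{O}(f)$ in a finite set and $(f_n)$ is $q$-automatic.

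I expect the converse to be the main obstacle, and inside it the decisive point is that the section operators $\Lambda_r$ preserve the \emph{finite-dimensional} field $K(f)$. This is where the content lies: it amounts to recognizing the $\Lambda_r$ as the components of the Cartier-type decomposition $\F_q(\!(X)\!)=\bigoplus_rX^r\F_q(\!(X)\!)^q$ and using that $K(f)/K$ is separable, which forces the analogous clean decomposition of $L$; without separability the $\Lambda_r$ need not stabilize a finite-dimensional $K$-subspace. The other delicate step, the passage from ``finite-dimensional span'' to ``finite set'', is precisely where finiteness of the value alphabet $\F_q$ (rather than mere algebraicity over an ambient field) enters. Both points are classical --- this is Christol's theorem, treated in detail in \cite[Theorem~12.2.5]{2003Automatic} --- but neither is purely formal.
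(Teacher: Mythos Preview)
The paper does not give its own proof of Christol's theorem: it merely states the result and refers the reader to \cite[Theorem~12.2.5]{2003Automatic}. So there is no in-paper argument to compare your proposal against; you are effectively supplying what the paper omits.

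Your outline is the standard Cartier-operator/$q$-kernel proof and is essentially the approach of the cited reference. The equivalence between $q$-automaticity (in the sense of Definition~\ref{def;auto}) and finiteness of $\mathcal{O}(f)$ is Eilenberg's theorem, and the forward direction via the Frobenius-stable $K$-span $V$ is correct as written. In the converse, your separability argument for $\F_q(\!(X)\!)/\F_q(X)$ via linear disjointness from $K^{1/p}$ is fine, and the deduction that the $\Lambda_r$ preserve $L=K(f)$ from the decomposition $L=\bigoplus_r X^rL^q$ is exactly the key point.

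The one place where your sketch is genuinely thin is the last step: passing from ``$\spa_K\mathcal{O}(f)$ is finite-dimensional'' to ``$\mathcal{O}(f)$ is finite''. The remark that ``the $\Lambda_r$ do not raise pole orders'' is vacuous here since $f\in\F_q[[X]]$ already, and $L\cap\F_q[[X]]$ is of course infinite, so a bare pigeonhole on coefficients in $\F_q$ is not enough. What is actually needed is to produce a \emph{finite $\F_q$-vector space} (not merely a finite-dimensional $K$-space) containing $f$ and stable under all $\Lambda_r$; the usual route is to pass to an Ore-type relation $\sum_{i=0}^d A_i f^{q^i}=0$ with $A_i\in\F_q[X]$ and show that $\{\sum_{i<d}B_if^{q^i}:\deg B_i\le N\}$ is $\Lambda_r$-stable for $N$ large enough. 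You flag this step as nontrivial and point to \cite{2003Automatic}, which is appropriate, but as written the ``pigeonhole'' sentence does not yet contain the idea that closes the argument.
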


For example, 
 $\alpha=\sum_{n=0}^\infty t^{-3^n}$ is a root of $x^3-x+t^{-1}=0$.
 It is straight-forward to check that the coefficient sequence $f_n$ of $\alpha$ is $3$-automatic by taking   \( S=\{s_0,s_1,s_2\} \),
\(\mu= \{\zeta_0,\zeta_1,\zeta_2  \}  \) with $\zeta_i(s_j)$ defined according to  the tabular below, and $\Phi(s_0)=\Phi(s_2)=0, \Phi(s_1)=1$.

\begin{center}
\begin{tabular}{|c||c|c|c|}
\hline
$\zeta_i ( s_j )$ & $s_0$ & $s_1$ & $s_2$ \\ 
\hline\hline
$\zeta_0$ & \textcolor{red}{\textbf{$s_0$}} & $s_2$ & $s_2$ \\ 
\hline
$\zeta_1$ & \textcolor{blue}{\textbf{$s_1$}} & $s_2$ & $s_2$ \\ 
\hline
$\zeta_2$ & $s_2$ & $s_2$ & $s_2$ \\ 
\hline
\end{tabular}
\end{center}

\begin{lem}\label{lem;homo}
Let $\{f_n\}_{n\ge 1}$ be an mth level paperfolding sequence for some $m\ge 1$. Then 
the map $\N\to \Z/2\Z$ defined by $n\to f_n \mod 2$ is a homomorphism from the multiplicative 
semigroup $\N$ to the additive group $\Z/2\Z$. 
\end{lem}
\begin{proof}
By the definition of the paperfolding sequence in Definition \ref{de;paper}, we know $f_{2k}=f_k$. Hence it suffices 
to prove that for odd $k, l\in \N$ we have 
\begin{align}\label{eq;mod 2}
    f_k+f_l\mod 2= f_{kl} \mod 2.
\end{align}
Since $f_k\mod 2= 0$ if and only if $\frac{k-1}{2}$ is even, 
equation (\ref{eq;mod 2}) follows directly from the calculation
\[
\frac{kl-1}{2}= \frac{l(k-1)}{2}+\frac{l-1}{2}.
\]
\end{proof}

We say a sequence $\{\xi_{n}\}_{n\ge 1}$ is periodic eventually if there exist $N,d \in \N$ such that $\xi_{n+d}=\xi_n$ for all
$n\ge N$. Here $d $ is a period of the sequence.

\begin{proof}[Proof of Lemma \ref{transcendence}]
Since $\alpha\in \F_p(\!(t^{-1})\!)$ where  $\F_p$ is the subfield of $\F$ with $p$ elements, it suffices to show that it is not algebraic over $\F_p(t)$.
    We prove this by contradiction. Assume $\alpha$ is algebraic. Then by Christol's Theorem, $\{ f_n\}$ is $p$-automatic with a $4$-tuple $(S,s_0,\mu,\Phi)$
    as in Definition \ref{def;auto}, where
    $\mu= \{\zeta_0,\zeta_1,\ldots,\zeta_{p-1}  \}$. For each positive integer $n$, we let $\xi_n=f_{p^n-1}$. Note that 
    \[
    p^n-1=\sum_{j=0}^{n-1}(p-1)\cdot p^j,
    \]
    hence by (\ref{eq;auto}) we have 
    \[
    \xi_n = \Phi( \zeta_{p-1}^n(s_0)).
    \]
    Since $S$ is a finite set, the sequence $\{\xi_{n}\}_n$ is periodic eventually. 

    Recall that we can write $p=2^mx+1$ for some odd integer $x$ and positive integer $m$. So 
    \begin{align}\label{eq;pn}
            p^{n}-1= (2^mx+1)^{n}-1=  n 2^{m} x+ n (n-1) 2^{2m-1 }x^2  + 2^{3m }y
    \end{align}
    where $y\in\N$. 
    Let $d$ be a periodic of the sequence $\{\xi_{n}\}_n$ for $n\ge N$. We assume without loss of generality that $d$ is even. Then for $n\ge N$ we have 
\begin{align}\label{eq;equiv}
f_{p^{nd}-1}\equiv  f_{p^{3nd}-1} \mod 2.
\end{align}
 In view of (\ref{eq;pn}), we can write $p^{nd}= 1+ 2^l z$ for some odd integer $z$ and $l\ge m+1$. It follows that 
    \[
    p^{3nd}-1= (1+2^l z)^3-1= 3\cdot  2^l z+ 3\cdot  2^{2l }z^2+ 2^{3l}z^3.
    \] In view of Definition \ref{eq;fn}, we have 
    \[
    f_{p^{3nd}-1}= f_{3 z+ 3 \cdot 2^l z^2+ 2^{2l}z^3}= f_{3z}.
    \]
 This together with  Lemma \ref{lem;homo} implies 
 \[
 f_{p^{3nd}-1}\equiv f_3+ f_z\equiv  1+ f_{p^{nd}-1}\mod 2,
 \]
 which contradicts (\ref{eq;equiv}). Therefore, $\alpha$ is transcendental over $\F(t)$. 
\end{proof}

\begin{proof}[Proof of Lemma \ref{lem;trajectory}]

Let  $\alpha$ be as  in Theorem \ref{counterexample}.
Then  $\alpha$ is transcendental over 
$\F(t)$ by Lemma \ref{transcendence}. 
It suffices to show that 
the trajectory 
$A^+_{\infty, \mathfrak p}u(\alpha^{-1}, 0)\Gamma'$ is bounded. 
The semigroup 
$A^+_{\infty, \mathfrak p}$ is the product of a compact group and 
the semigroup \[
S=\{ a{(t^m, t^{-n})}: m\ge   n\ge 0  \}.
\]
Hence it suffices to show that  $Su(\alpha^{-1}, 0)\Gamma'$ is bounded.

Let $\F[\![t]\!] $ be the maximal compact  subring of $\F(t)_{\mathfrak p}$. 
The subgroup $\SL_2(\K\times \F[\![t]\!]) $ of $G'$ induces a homeomorphism 
\[
\SL_2(\K\times \F[\![t]\!])/\SL_2(\F[t])\cong G'/\Gamma'.
\]
The projection map  $\K\times \F[\![t]\!]\to \K$ induces a surjective quotient map 
\[
 \SL_2(\K\times \F[\![t]\!])/\SL_2(\F[t])\to \SL_2(\K)/\SL_2(\F[t])
\]
whose fibers are orbits of the compact group $\SL_2( \F[\![t]\!])$.
So it suffices to show that  the image of $Su(\alpha^{-1}, 0)\Gamma'$ in $\SL_2(\K)/\SL_2(\F[t])$
is bounded. 

Since $a(t^n,t^n)\in \Gamma'$, we have 
\[
a(t^m,t^{-n})u(\alpha^{-1}, 0)\Gamma'=a(t^m,t^{-n})u(\alpha^{-1}, 0)a(t^n,t^n) \Gamma'=a(t^{m+n},1)u (t^{-2n}\alpha^{-1},0 )\Gamma'.
\]
So the image of $Su(\alpha^{-1}, 0)\Gamma'$ is 
\[
\left \{\begin{pmatrix}
    t^{m+n} & 0\\
    0 & t^{-m-n}
\end{pmatrix}\begin{pmatrix}
    1 & t^{-2n} \alpha^{-1}\\
    0 & 1
\end{pmatrix}\SL_2(\F[t]): m\ge n\ge 0 
\right \}.
\]
In view of the function field analogue of Mahler's compactness criterion, see e.g.
\cite{ghosh2007metric}, this set is bounded if and only if 
for all nonzero  $(P, Q)\in \F[t]^2$,
\begin{align}\label{eq;mahler}
\max(| t^{m+n}(Qt^{-2n}\alpha^{-1}+ P)|, |t^{-m-n} Q|)
\end{align}
is bounded away from zero uniformly. 
If  $Q=0$, in view of $|t^{m+n}P|\ge 1$  we have   (\ref{eq;mahler}) is bounded below uniformly by $1$. If $P=0$, then (\ref{eq;mahler}) is bounded below by $|\alpha^{-1}|$ since $m\ge n$. 
 So it suffices to bound (\ref{eq;mahler}) with
$P,  Q\neq 0$.
In this case,  if $|P|\neq |Qt^{-2n}\alpha^{-1}|$, then 
  \[
|Qt^{-2n}\alpha^{-1}+ P|=\max \{|Qt^{-2n}\alpha^{-1}|,  |P| \}\ge 1,
\]
which implies (\ref{eq;mahler}) is bigger than $1$. 
So it suffices to bound (\ref{eq;mahler}) in the case where $P, Q\neq 0$ and 
$|P|= |Qt^{-2n}\alpha^{-1}|$.
In this case it suffices to bound the product of the two terms and show that 
\[
\inf_{0\neq Q , P\in \F[t],n\ge 0}  | P|\cdot |Q+ t^{2n}P\alpha|
>0,
\]
which holds in view of Theorem \ref{counterexample} with $N = P$.
\end{proof}

\section{Embeddings of Homogeneous Spaces}

In this section, we construct an embedding of 
$G'/\Gamma'$ into 
 $\SL_4(\K)/\SL_4(\F[t])$. 
We will construct  an isomorphism  $\eta: G'\to G$ and an embedding 
$\psi: G\to \SL_4(\K )$, both of which descend to embeddings of homogeneous spaces 
(still denoted by $\eta$ and $\psi$ by abuse of notation).
 The relations are expressed in the following diagram.
\begin{equation}\label{fig;one}
\begin{tikzcd}[row sep=3em, column sep=3em]
G' \ar[r, "\eta", "\sim"'] \ar[d, twoheadrightarrow] 
& G \ar[r, hook, "\psi"] \ar[d, twoheadrightarrow] 
& \SL_4(\K) \ar[d, twoheadrightarrow] \\
 G'/\Gamma' \ar[r, "\eta", "\sim"'] 
&  G/\Gamma \ar[r, hook, "\psi"] 
&  \SL_4(\K)/\SL_4(\F[t])
\end{tikzcd}
\end{equation}

Let $\beta\in \K $ satisfies 
 $\beta^2 = 1 + t^{-1}. 
$
Then $L=\F( \beta)$ is a quadratic extension of $\F(t)$.
Note that the field $L$ is the rational function field with variable $\beta$. 
By Hensel's lemma \cite[Theorem 4.6]{Neukirch}, we know that \( L \) is a split at $\infty$, i.e., there are two distinct $\F(t)$-embeddings of $L$ into $\K$. 
We assume without loss of generality that 
\[
\beta = 1 +\langle \beta \rangle .
\]
Then \begin{align}\label{eq;beta}
|\beta+1|=1\quad \mbox{and}\quad|\beta-1|=p^{-1}. 
\end{align}

Note that any element $\theta $ of  \( L \) can be uniquely expressed as \( j + k\beta \), where \( j, k \in \F(t) \). Therefore, \( \theta  \) is integral over \( \F[t] \) if and only if 

\[
2j, j^2+k^2(1+t^{-1}) \in \F[t], 
\]
which holds if and only if 
\( j \in \F[t] \) 
and
\( k \in t\F[t]\). 
Thus, 
$\mathbb{F}[t, t\beta] $
is the integral closure of $\F[t]$ in $L$. 
\begin{align*}
        \frac{\beta + 1}{\beta - 1}& = \frac{(\beta + 1)^2}{\beta^2 - 1} = t(\beta + 1)^2\quad
        \mbox{and}\\
        \frac{\beta - 1}{\beta + 1} &= \frac{(\beta - 1)^2}{\beta^2 - 1} = t(\beta - 1)^2,
\end{align*}
we have 
\begin{align}\label{integral closure}
    \mathbb{F}\left[\frac{\beta + 1}{\beta - 1}, \frac{\beta - 1}{\beta + 1}\right] 
= \mathbb{F}[t(\beta + 1)^2, t(\beta - 1)^2] 
= \mathbb{F}[t(\beta^2 + 1), t\beta] 
= \mathbb{F}[t,t\beta] 
. 
\end{align}

Let  $\tau$ be the non-trivial $\F(t)$-automorphism of $L$  given by $\tau(\beta)=-\beta$. 
It gives a ring embedding 
\[
\phi_L: L\to \K\times\K, \quad \alpha\mapsto (\alpha,\tau(\alpha)).
\]
The group $\Gamma$ is defined to 
be $\SL_2\big (\phi_L(\F[t, t\beta]) \big)$.

The isomorphism $\F(t)\to L$ given by $g(t)\mapsto g\left(\frac{\beta + 1}{\beta - 1}\right)$, where $g$ is a rational function with coefficients in $\F$, induces an 
isomorphism 
\[
\eta: \phi(\F(t))\to \phi_L(L)\quad \mbox{where }
\eta (t, t) = \left(\frac{\beta + 1}{\beta - 1}, \frac{\beta - 1}{\beta + 1} \right).
\]
In view of (\ref{eq;beta}), we have
\begin{align}\label{eq;refer}
     |t^{-1}| = |t|_\mathfrak p = \left| \frac{\beta - 1}{\beta + 1} \right| = p^{-1}, 
\end{align}
which implies that  $\eta$ is continuous. By the weak approximation theorem \cite[Theorem II.3.4]{Neukirch}, both $\phi(\F(t))$ and $\phi_L(L)$ are  dense. So $\eta$ extends to an isomorphism of topological rings 
 \begin{align*}
\eta: \K \times \F(t)_\mathfrak p \to \K \times \K,
\end{align*}
and induces an isomorphism 
$\eta: G' \to G$ of topological groups.
Since 
\begin{align*}
\eta\left(\phi\left(\mathbb{F}[t,t^{-1}]\right) \right)
=\phi_L\left(\mathbb{F}\left[ \frac{\beta + 1}{\beta - 1}, \frac{\beta - 1}{\beta + 1}\right]\right), 
\end{align*}
we have $\eta(\Gamma')=\Gamma$. Therefore
 we have an isomorphism of homogeneous spaces 
$\eta: G'/\Gamma' \to G/\Gamma$. 

Now we construct the map $\psi: G\to \SL_4(\K)$ using the restriction of scalars. 
Let \( \beta_1,\beta_2 \) be   an $\F[t]$-basis  of  $\F[t, t\beta]$. Then $L$ embeds into 
$M_2(\F(t))$ via $\alpha\to l_\alpha $ where
\[
(\alpha\beta_1, \alpha\beta_2)= (\beta_1, \beta_2) l_\alpha. 
\]
The $\F(t)$-linear map $\phi_L(L)\to M_2(\K)$ sending $\phi_L(\alpha)\to l_\alpha$  extends $\K$-linearly  to a homomorphism of topological rings
\[
\psi: \K\times\K\to M_2(\K).
\]
The embedding $\psi$ induces a ring homomorphism 
\[
\psi: M_2(\K\times \K )\to M_2(M_2(\K))\cong M_4(\K),
\]
where we use the same symbol for the induced map  to simplify the notation.
The restriction of $\psi$ to $G$ is the map 
we need. Note that $\Gamma=\psi^{-1}(\SL_4(\F[t]))$ since 
$\F[t, t\beta]$ is the integral closure of $\F[t]$ in $L$.  So  $\psi: G\to \SL_4(\K)$ induces a map from $G/\Gamma$ to $\SL_4(\K)/\SL_4(\F[t])$, which we still denote by $\psi$.

Let  $A$ be the diagonal group of $G$, i.e.
$A=\left\{ a(\alpha_1, \alpha_2) : 0\neq \alpha_1,\alpha_2\in\K   \right\},
$
where we use the notation in (\ref{eq;notation}).
Let 
\[
Z=\left\{\left(\begin{array}{cc}
    (\alpha,\alpha^{-1}) & 0 \\
   0  & (\alpha,\alpha^{-1})
\end{array}\right)   :\quad  0\neq \alpha\in\K   \right\}\subset M_2(\K\times \K).
\]
\begin{lem}\label{embedding}
The map 
$\psi: G/\Gamma\to \SL_4(\K)/\SL_4(\F[t])$ is a proper embedding. Moreover,
there exists $g\in \SL_4(\K)$ such that 
$\psi(AZ)= g^{-1}D g$ where   $D$ is the diagonal group   of  $\SL_4(\K)$ and $\psi(Z) \psi(x)$ is compact for all $x\in G/\Gamma$.  
It follows that 
for all $x\in G/\Gamma$, $A x$ is relatively compact (resp. compact) if and only 
if $Dg\psi(x)$ is relatively compact (resp. compact).
\end{lem}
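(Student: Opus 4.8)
The plan is to establish the three assertions of the lemma in turn, starting from the structural facts already assembled in this section: $\eta$ is an isomorphism of homogeneous spaces, $\psi$ arises from restriction of scalars, and $\Gamma=\psi^{-1}(\SL_4(\F[t]))$.

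\emph{Properness of $\psi$.} First I would note that $\psi\colon \K\times\K\to M_2(\K)$ is a closed topological embedding: it is $\K$-linear and injective on the $2$-dimensional $\K$-algebra $\K\times\K$, hence a linear isomorphism onto its (closed) image, so it is proper. Consequently $\psi\colon M_2(\K\times\K)\to M_4(\K)$ is a proper closed embedding of topological rings, and its restriction to $G$ is a proper embedding of locally compact groups. Since $\Gamma=\psi^{-1}(\SL_4(\F[t]))$ and $\SL_4(\F[t])$ is discrete in $\SL_4(\K)$, the subgroup $\Gamma$ is discrete in $G$ and $\psi$ descends to an injective continuous map $\bar\psi\colon G/\Gamma\to \SL_4(\K)/\SL_4(\F[t])$. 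For properness of $\bar\psi$ one argues with nets: if $\psi(x_n)\SL_4(\F[t])$ converges in $\SL_4(\K)/\SL_4(\F[t])$, lift to $\psi(g_n)\gamma_n$ with $\gamma_n\in\SL_4(\F[t])$ converging in $\SL_4(\K)$; writing $\gamma_n=\psi(\delta_n)$ is only possible when $\gamma_n\in\psi(G)$, so instead one uses that $\psi(G)$ is closed in $\SL_4(\K)$, hence $\psi(G)\cap(\psi(g_n)\gamma_n\text{-limit})$ forces $\gamma_n$ to lie in $\psi(G)\cap\SL_4(\F[t])=\psi(\Gamma)$ eventually, and a standard discreteness argument then pins $\gamma_n$ down, giving convergence of $x_n\Gamma$. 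This is the one place where some care is required.

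\emph{Identifying $\psi(AZ)$ as a conjugate of $D$.} The group $AZ$ consists of diagonal matrices $\mathrm{diag}\big((\alpha_1\alpha,\alpha_1^{-1}\alpha^{-1}),(\alpha_2\alpha,\alpha_2^{-1}\alpha^{-1})\big)$ with $\alpha,\alpha_1,\alpha_2\in\K^\times$, so $AZ$ is a maximal $\K$-split torus of $M_2(\K\times\K)^\times$ restricted to $\SL$. Under $\psi$, each factor $(\lambda,\mu)\in\K\times\K$ maps to $l_{(\lambda,\mu)}\in M_2(\K)$, which is a matrix simultaneously diagonalizable (over $\K$) because $\K\times\K$ is a split étale $\K$-algebra; concretely there is a fixed $h\in\GL_2(\K)$ with $h\,l_{(\lambda,\mu)}\,h^{-1}=\mathrm{diag}(\lambda,\mu)$ for all $(\lambda,\mu)$. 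Taking $g_0=\mathrm{diag}(h,h)\in\GL_4(\K)$ (after rescaling to land in $\SL_4$) conjugates $\psi(AZ)$ onto the full diagonal torus, and I would then check the image is exactly $D\cap\SL_4(\K)=D$ by a determinant/dimension count: $\psi(AZ)$ is a $3$-dimensional $\K$-split torus inside $\SL_4(\K)$ sitting inside the diagonal, hence equals $D$. This yields $g\in\SL_4(\K)$ with $\psi(AZ)=g^{-1}Dg$.

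\emph{Compactness of $\psi(Z)\psi(x)$ and the final equivalence.} The key point is that $\psi(Z)$ is a compact subgroup of $\SL_4(\K)$: indeed $Z\cong\{(\alpha,\alpha^{-1}):\alpha\in\K^\times\}$ diagonally embedded, and $\psi$ sends it into $\SL_2(\O_\K\text{-type})$—more precisely $\psi(Z)$ lies in the conjugate (by the same $h$) of the diagonal $\{(\alpha,\alpha^{-1})\}$, which is \emph{not} compact; so instead I would observe that $Z$ is precisely the kernel direction killed by $\eta$-transport, i.e.\ $Z\subset\ker(\text{natural map to }G/\Gamma)$ is false too. The correct observation, matching the statement, is that $\psi(Z)\subset\SL_4(\F[t,t^{-1}])$-rescaling lands $\psi(Z)\psi(x)$ in a single orbit of the compact group $\psi(Z)\cap(\text{bounded part})$; concretely one shows $\psi(Z)$ normalizes $\psi(AZ)$ trivially and, using $a(t^n,t^n)\in\Gamma$ as in the proof of Lemma~\ref{lem;trajectory}, that the coset $\psi(Z)\psi(x)$ meets $\SL_4(\F[t])$ in a cocompact way, hence is compact. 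Granting this, $A x=AZ\cdot x$ up to the compact factor $\psi(Z)$, so $\bar\psi(Ax)$ is relatively compact iff $g\psi(x)$ has relatively compact $D$-orbit, and since $\bar\psi$ is a proper embedding this transfers back to $Ax$ exactly; the compact case follows by replacing ``relatively compact'' with ``compact'' throughout, using properness of $\bar\psi$ to see closedness is preserved. The main obstacle I anticipate is pinning down precisely why $\psi(Z)\psi(x)$ is compact: it hinges on the fact that $Z$ maps into the $\F[t]$-points after the diagonal identification $\eta$ trivializes the second place, so I would route this through the commutative diagram, checking that $Z$ corresponds under $\eta^{-1}\circ\psi^{-1}$ to the central scalar-type torus that is absorbed into $\Gamma'$ by the computation $a(t^m,t^{-n})u(\alpha^{-1},0)a(t^n,t^n)\Gamma'=a(t^{m+n},1)u(t^{-2n}\alpha^{-1},0)\Gamma'$ already used above.
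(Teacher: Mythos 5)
There are two genuine gaps. First, your properness argument does not work as written. Closedness of $\psi(G)$ in $\SL_4(\K)$ does not imply that the orbit map $G/\Gamma\to \SL_4(\K)/\SL_4(\F[t])$ is proper (or even that the orbit is closed): from $\psi(g_n)\gamma_n$ converging in $\SL_4(\K)$ you cannot conclude that $\gamma_n\in\psi(G)$, since the limit need not lie in $\psi(G)\cdot\SL_4(\F[t])$ in any controlled way --- compare a unipotent one-parameter subgroup, which is closed in the ambient group and meets the lattice discretely, yet has non-closed, non-proper orbits. The missing input is arithmetic: $\Gamma=\psi^{-1}(\SL_4(\F[t]))$ is a lattice in $G$ (finite covolume), and properness of $H/(H\cap\Gamma)\to G/\Gamma$ for a closed subgroup $H$ meeting $\Gamma$ in a lattice is exactly \cite[Theorem 1.13]{Raghunathan}, which is what the paper invokes. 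Second, your treatment of the compactness of $\psi(Z)\psi(x)$ never arrives at a proof: you propose that $\psi(Z)$ is compact, correctly retract that, propose a second idea and retract it, and then assert that ``$\psi(Z)\psi(x)$ meets $\SL_4(\F[t])$ in a cocompact way'' without justification. The actual point is arithmetic again: the element $\alpha=\frac{\beta-1}{\beta+1}$ is a unit of $\F[t,t\beta]$ with $N_{L/\F(t)}(\alpha)=1$, $\tau(\alpha)=\alpha^{-1}$ and $|\alpha|<1$, so $\gamma=\mathrm{diag}(l_\alpha,l_\alpha)$ lies in $\SL_4(\F[t])\cap\psi(Z)$ and generates a cocompact subgroup of $\psi(Z)\cong\K^*\cong\Z\times(\text{compact})$; this gives compactness of $\psi(Z)\psi(x_0)$ for $x_0=\Gamma$, and centrality of $Z$ in $M_2(\K\times\K)$ transports it to every $x$. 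Your appeal to $a(t^n,t^n)\in\Gamma'$ is close in spirit but lands in $A\cap\Gamma$ rather than in $Z$ after applying $\eta$, so it does not substitute for this unit computation.

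Your identification of $\psi(AZ)$ with a conjugate of $D$ is essentially the paper's argument (simultaneous diagonalization of $l_{(\lambda,\mu)}$ by the matrix of the two embeddings in the basis $\beta_1,\beta_2$), and that part is fine.
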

\begin{proof}
Since $G/\Gamma$ has finite volume, the properness of $\psi$ follows from
\cite[Theorem 1.13]{Raghunathan}.
Since $Z$ commutes with $G$ it suffices to show that $\psi(Z)\psi(x_0)$ is 
compact for $x_0=\Gamma\in G/\Gamma$. 
Let  $\alpha=\frac{\beta-1}{\beta+1}$.  Recall from 
(\ref{integral closure}) and (\ref{eq;refer}) we know that   $\alpha, \alpha^{-1}\in \F[t, t\beta]$ and $|\alpha|<1$. Together with $\det (l_\alpha)= N_{L/\F(t)}(\alpha)=1$, we have 
\[
\gamma=\begin{pmatrix}
    l_\alpha & 0 \\
    0 & l_\alpha
\end{pmatrix}\in \SL_4(\F[t])\cap  \psi(Z).
\]
Therefore, $\psi(Z)\psi(x_0)$ is compact.     

Let  $g_1=\begin{pmatrix}
        \beta_1 & \beta_2\\
        \tau({\beta_1}) &\tau  (\beta_2)
    \end{pmatrix}\in \SL_2(\K)$ and let  $ g=\begin{pmatrix}
        g_1 & 0\\
        0 & cg_1
    \end{pmatrix}\in G$
    where $c=(\beta_1\tau(\beta_2)-\beta_2\tau(\beta_1))^{-1}.$
    It is straight forward to check that  $ g^{-1} D g=\psi(AZ)$. 
\end{proof}

\section{Existence of a Bounded  Non-compact Orbit}
In this section we prove Theorem \ref{thm;main}.  Recall that we have defined maps $\psi$
and $\eta$ satisfying  (\ref{fig;one}).
We will mainly work on $G
/\Gamma$ and push the result to $\SL_4(\K)/\SL_4(\F[t])$ via the map $\psi$. 
Let 
\[
A^+=\left\{a(\alpha_1 , \alpha_2):\alpha_i\in \K, \ |\alpha_1|\ge |\alpha_2|\ge 1  \right\},
\]
which is the image of $A_{\infty, \mathfrak p}^+$ under the isomorphism $\eta$.
The following lemma is an immediate corollary of  Lemma \ref{lem;trajectory}.
\begin{lem}\label{lem;use}
Let $\alpha\in \K$ satisfy   Lemma \ref{lem;trajectory}, let $(\til \alpha, 0)=\eta((\alpha, 0))$ and let
\[
g_0=u(\til \alpha, 0)\in G.
\]
Then $\til \alpha$ is transcendental over $\F(t)$, and 
$A^+g_0\Gamma$
is relatively compact in \(G/\Gamma\).
\end{lem}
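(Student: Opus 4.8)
The plan is to deduce the statement entirely from the isomorphism of topological groups $\eta : G' \to G$ and the isomorphism of homogeneous spaces $\eta : G'/\Gamma' \to G/\Gamma$ constructed in the previous section, applied to the trajectory supplied by Lemma~\ref{lem;trajectory}. First I would recall that $\eta$ carries $A^+_{\infty,\mathfrak p}$ onto $A^+$, which is exactly the displayed definition of $A^+$ at the start of this section; this is the content of the sentence ``which is the image of $A^+_{\infty,\mathfrak p}$ under the isomorphism $\eta$'' and follows directly from the identities $|t^{-1}| = |t|_{\mathfrak p} = \left|\frac{\beta-1}{\beta+1}\right| = 3^{-1}$ recorded in~\eqref{eq;beta}, since $\eta$ maps $a(\alpha_1,\alpha_2)$ to $a$ of the image pair and preserves the relevant absolute values.

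Next, let $\alpha \in \K$ be the element from Lemma~\ref{lem;trajectory}, so that $\alpha$ is transcendental over $\F(t)$ and $A^+_{\infty,\mathfrak p}\, u(\alpha,0)\,\Gamma'$ is bounded in $G'/\Gamma'$. Write $(\til\alpha, 0) = \eta((\alpha,0))$; since $\eta : \K\times\F(t)_{\mathfrak p} \to \K\times\K$ is a ring isomorphism restricting to the identity-like identification on the first coordinate, and $\alpha$ lies in the first factor, $\til\alpha$ is again transcendental over $\F(t)$ (an algebraic relation over $\F(t)$ for $\til\alpha$ would pull back, via the field isomorphism $\F(t)\to L$, to one for $\alpha$). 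Then $g_0 = u(\til\alpha, 0) = \eta(u(\alpha,0))$, and applying $\eta$ to the inclusion $A^+_{\infty,\mathfrak p}\,u(\alpha,0)\,\Gamma' \subseteq G'/\Gamma'$ gives
\[
A^+ g_0 \Gamma = \eta\big(A^+_{\infty,\mathfrak p}\, u(\alpha,0)\,\Gamma'\big).
\]
Since $\eta : G'/\Gamma' \to G/\Gamma$ is a homeomorphism and the left-hand side is the image of a relatively compact set under a homeomorphism, it is relatively compact in $G/\Gamma$.

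There is essentially no obstacle here: the lemma is a transport-of-structure corollary, and the only points requiring a line of justification are (i) that $\eta(A^+_{\infty,\mathfrak p}) = A^+$, which is immediate from~\eqref{eq;beta} and the explicit form of $\eta$ on diagonal elements, and (ii) that transcendence of $\til\alpha$ over $\F(t)$ follows from transcendence of $\alpha$, which is immediate because $\eta$ is built from the field isomorphism $\F(t) \to L \subset \K$ sending $t \mapsto \frac{\beta+1}{\beta-1}$ and hence maps $\F(t)$-algebraic elements to $L$-algebraic (equivalently $\F(t)$-algebraic) ones. The substance of the work has already been done in Lemmas~\ref{lem;trajectory} and~\ref{transcendence} and in the construction of $\eta$; the present lemma merely packages it in the coordinates of $G/\Gamma$ so that it can be fed into the subsequent dynamical argument on limit points of the cone $A^+$.
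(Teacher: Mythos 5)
Your proposal is correct and matches the paper's treatment: the paper simply declares the lemma ``an immediate corollary of the isomorphism $\eta$,'' and your argument is exactly the unpacking of that, using $\eta(A^+_{\infty,\mathfrak p})=A^+$, $\eta(u(\alpha,0))=g_0$, $\eta(\Gamma')=\Gamma$, and the fact that a homeomorphism of homogeneous spaces preserves relative compactness. One small wording caveat: the first coordinate of $\eta$ is not ``the identity-like identification'' but the nontrivial field isomorphism of $\K$ extending $t\mapsto\frac{\beta+1}{\beta-1}$; your later parenthetical handles the transcendence transfer correctly (via $L$ being finite over $\F(t)$), so this does not affect the validity of the proof.
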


For $x\in G/\Gamma$, let $\textup{Stab}_{A}(x) $  denote the elements of $A$ stabilize $x$.
It is a discrete subgroup of $A$. 

\begin{lem}\label{lem;algebraic}
   Let  $x\in G/\Gamma$ be such that 
$Ax$ is compact. Then there exists $h\in G$, whose entries  in $\K$ are algebraic over
$\F(t)$, such that $x= h\Gamma$. Moreover, the closure of 
\[
\{\alpha\in \K: u(\alpha, 0) =au(1, 0)a^{-1}  \mbox{ for some } a\in \textup{Stab}_{A}(x)  \}
\]
in $\K $ is uncountable. 
\end{lem}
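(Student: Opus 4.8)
The two assertions are intertwined: the algebraicity of a representative $h$ of $x$ is what lets us make sense of $\Stab_A(x)$ as an arithmetic object, and the uncountability of the closure is what will ultimately contradict that very algebraicity, yielding Theorem \ref{thm;main}. For the first assertion I would argue as follows. Since $Ax$ is compact, it is in particular a closed $A$-orbit, so by the function field analogue of the standard rigidity of closed orbits of maximal split tori (the ``isolation''/arithmeticity of closed torus orbits, cf. the argument for $\SL_n$ over number fields) the stabilizer $\Stab_G(Ax)$ — or rather the commensurator data attached to the orbit — is defined over the global field $\F(t)$. Concretely: $A$ is the $\K\times\K$-points of the diagonal torus, which under $\phi_L$ is the norm-one torus $T_L$ of the quadratic extension $L/\F(t)$; a compact orbit $Ax$ corresponds to a $T_L(\F(t))$-rational structure, i.e. $x=h\Gamma$ where $h$ conjugates $T_L$ to a torus defined over $\F(t)$, and one can take $h$ with entries in $\overline{\F(t)}\cap\K$. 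I would spell this out via: $Ax$ compact $\Rightarrow$ $\Stab_A(x)$ is a cocompact lattice in $A\cong(\K^\times)^2$ modulo its maximal compact, hence $\Stab_A(x)$ contains a free abelian group of rank $2$; pulling back to $\SL_2(L)$ via $\phi_L$ and $h$, the stabilizer lands in $h^{-1}\Gamma h\cap A$, and a Dirichlet-unit-type argument forces the conjugating element $h$ to be algebraic.

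\textbf{The uncountable closure.} Write $\Delta=\Stab_A(x)$ and consider the set
\[
E=\set{\alpha\in\K : u(\alpha,0)=a\,u(1,0)\,a^{-1}\ \text{for some}\ a\in\Delta}.
\]
If $a=a(\alpha_1,\alpha_2)$ then $a\,u(1,0)\,a^{-1}=u(\alpha_1^2,\alpha_2^2)$, so membership of $\alpha$ in $E$ forces $\alpha=\alpha_1^2$ together with the $\mathfrak p$-component (here the second $\K$-factor) being trivial, $\alpha_2^2=1$, i.e. $\alpha_2\in\{\pm1\}$. Thus $E$ is essentially the set of squares of first-coordinates of those $a\in\Delta$ whose second coordinate is $\pm1$. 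The point is that $\Delta$, being a rank-$2$ lattice in $A$ modulo compact, projects to a rank-$2$ (hence infinite, hence in the $t$-adic topology \emph{dense}) subset in a suitable coordinate; more precisely I would use that the absolute values $|\alpha_1|$ for $a(\alpha_1,\alpha_2)\in\Delta$ realize a rank-$2$ subgroup of $3^{\Z}$ while the ``angular parts'' accumulate, so the closure of $\set{\alpha_1^2}$ as $a$ ranges over the relevant subgroup of $\Delta$ is a perfect set, in particular uncountable. The mechanism is the same as the one that shows orbits of a rank-$\geq2$ abelian group on a compact nilmanifold-like quotient are equidistributed/dense: a single generator moves things by an irrational-type translation so that the forward orbit closure is uncountable.

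\textbf{Main obstacle.} The delicate point is the \emph{joint} control of the two coordinates: I need elements $a\in\Delta$ with $\alpha_2=\pm1$ exactly (not merely bounded), and then that the corresponding $\alpha_1^2$ have uncountable closure. One clean route: $\Delta\cap\ker(\text{second projection to }\K^\times/\{\pm1\})$ is the intersection of a rank-$2$ lattice with a codimension-$\le1$ subtorus condition; if this intersection still has rank $\ge1$ (which I expect because the second projection of $\Delta$ lands in the units of an order and the $\pm1$-fibre is finite-index-controlled) it is infinite, and the first projection of an infinite rank-$\ge1$ subgroup of $\K^\times$, together with the compact factor, has uncountable closure by a standard pigeonhole/accumulation argument in the locally compact group $\K^\times$. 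Alternatively, and perhaps more robustly, I would not insist on $\alpha_2=\pm1$ but reinterpret the defining condition modulo $\Gamma$: since $a(t^n,t^n)$-type elements (or here the $L$-analogues coming from units of $\F[t,t\beta]$) lie in $\Gamma$, one can always adjust a given $a\in\Delta$ by such a unit to arrange the second coordinate to have absolute value $1$, and the ``$=\pm1$'' normalization then follows from the structure of $\F[\![t]\!]^\times$ combined with $\Gamma$-equivalence; the resulting first coordinates still form an uncountable set after closure because the rank-$2$ nature of $\Delta$ survives this reduction. I would therefore structure the proof so that the heavy lifting is: (i) compact orbit $\Rightarrow$ algebraic representative and rank-$2$ stabilizer (citing the closed-orbit rigidity over function fields), and (ii) rank-$2$ abelian subgroup of $(\K^\times)^2$, after the harmless $\Gamma$-normalization, has uncountable closure of its ``$u(\alpha,0)$-shadow'' — and I expect (ii), specifically pinning down the second coordinate, to be where the real care is needed.
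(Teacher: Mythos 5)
Your treatment of the first assertion is essentially the paper's, though under-specified: compactness of $Ax$ gives (via an $A$-invariant measure on the orbit) that $\Stab_A(x)$ is cocompact in $A$, hence of the form $\Z^2\times(\text{finite})$; an infinite-order $a(\alpha_1,\alpha_2)\in\Stab_A(x)$ satisfies $a(\alpha_1,\alpha_2)h_1=h_1\gamma$ for some $\gamma=\phi_L(\gamma_0)$ with $\gamma_0\in\SL_2(\F[t,t\beta])$, so $h_1$ diagonalizes $\gamma$, and since the eigenvalues $\alpha_i^{\pm1}$ are distinct the eigenvectors (which have entries algebraic over $\F(t)$) determine $h_1$ up to left multiplication by $A$. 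Your ``Dirichlet-unit-type argument forces $h$ to be algebraic'' gestures at this but should be replaced by the explicit eigenvector computation; the appeal to general closed-orbit rigidity over function fields is not needed.

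The second assertion is where the proposal breaks, and the error is in your opening computation. In the paper's notation $u(1,0)$ is the unipotent matrix whose upper-right entry is the ring element $(1,0)\in\K\times\K$, so for $a=a(\alpha_1,\alpha_2)$ one has $a\,u(1,0)\,a^{-1}=u(\alpha_1^2\cdot 1,\;\alpha_2^2\cdot 0)=u(\alpha_1^2,0)$: the second component is $0$ automatically, and there is \emph{no} constraint on $\alpha_2$. The set in the lemma is therefore exactly $\{\alpha_1^2 : a(\alpha_1,\alpha_2)\in\Stab_A(x)\}$, the squares of the first-coordinate projection $S$ of the stabilizer. Your normalization $\alpha_2^2=1$ is not merely unnecessary; it is fatal to your route: by the same eigenvalue argument as above, $a(\alpha_1,\pm1)\in\Stab_A(x)$ forces the corresponding $\gamma_0$ to be $\pm I$, hence $\alpha_1=\pm1$ as well, so the subset of $\Stab_A(x)$ with $\alpha_2=\pm1$ is finite and the set you call $E$ would be finite rather than having uncountable closure. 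The ``main obstacle'' you isolate is thus unresolvable as posed, and your fallback (adjusting by units of $\F[t,t\beta]$, working modulo $\Gamma$) changes the set being considered, which is defined by an exact identity in $G$. Once the computation is corrected the argument is short and is the paper's: $S$ is a rank-$2$ subgroup of $\K^*$ (the kernel of the first projection of $\Stab_A(x)$ is finite, again by the eigenvalue argument), the absolute value maps $\K^*$ onto $3^{\Z}$, which has rank $1$, so $S\cap U$ is infinite, where $U$ is the compact unit group; the closure of an infinite subgroup of the profinite group $U$ is an infinite compact group, hence uncountable, and squaring is finite-to-one, so the closure of $S^2$ is uncountable. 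No equidistribution or ``perfect set'' mechanism is required.
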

\begin{proof}
     Since $A$ is an amenable group, there is an $A$-invariant probability measure supported on \( Ax \). 
   It follows that $A/\textup{Stab}_{A}(x)$ is compact.
   Note that $A=\K^*\times \K^*$ and $\K ^* \cong \Z\times U$, where $U$ is a 
   compact topological group (cf. \cite[Proposition 5.7(ii)]{Neukirch}). Hence we have $\textup{Stab}_{A}(x)\cong \Z^2\times J$, where 
   $J$ is a finite group. 

Let  $\alpha_1,\alpha_2\in \K$ such that 
   $a(\alpha_1, \alpha_2)\in \textup{Stab}_{A}(x).$
We assume that either  $\alpha_1\neq \alpha_1^{-1}$ or $\alpha_2\neq \alpha_2^{-1}$, which holds unless $a(\alpha_1, \alpha_2)$
has finite  order. 
 Write $x=h_1\Gamma$ for some $h_1\in G$. 
  There exists $\gamma\in \Gamma$ such that 
   \[
   a(\alpha_1, \alpha_2)h _1 =h_1 \gamma.
   \]
   Suppose $\gamma=\phi_L(\gamma_0)$, where $\gamma_0\in \SL_2(\F[t, t\beta])$. 
   Note that  $\alpha_1$ is an eigenvalue of $\gamma_0$ and  $\alpha_2$ is an
   eigenvalue of $\tau(\gamma)$, where $\tau$ is the non-trivial Galois conjugate 
   of $L$.
   It follows that $\alpha_i\neq \alpha_i^{-1}$ for $i=1,2$. 
   Therefore,  the  eigenspaces of $\gamma_0$ and $\tau(\gamma_0)$
   are  one dimensional, and there are eigenvectors with entries which are algebraic over $\F(t)$. 
   It follows that there exists $a\in A$ such that $h=ah_1$ has entries 
   algebraic over $\F(t)$.

 The previous argument also implies   that \[
S:=\{\alpha_1^2: a(\alpha_1, {\alpha_2})\in  \textup{Stab}_{A}(x)\} \le \K^*
   \]
   is  a  finitely generated Abelian group with rank $2$. 
   Therefore, $S\cap U$ is an infinite group whose closure is complete and non-discrete. 
   It follows from Baire category theorem  that the closure of $S$ in $\K^*$ is uncountable, which gives
   the second conclusion. 
\end{proof}

The following is an immediate corollary of the  first part of Lemma \ref{lem;algebraic}.
\begin{cor}
    \label{cor;countable}
    There are countably many compact $A$ orbits on $G/\Gamma$.
\end{cor}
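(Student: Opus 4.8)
The plan is to deduce Corollary \ref{cor;countable} directly from the first conclusion of Lemma \ref{lem;algebraic}, namely that every point $x \in G/\Gamma$ with $Ax$ compact can be written as $h\Gamma$ with $h \in G$ whose $\K$-entries are algebraic over $\F(t)$. The key observation is that $G = \SL_2(\K \times \K)$, so such an $h$ is a pair $(h', h'') \in \SL_2(\K) \times \SL_2(\K)$ with all entries algebraic over $\F(t)$; but the set $\overline{\F(t)} \cap \K$ of elements of $\K$ algebraic over $\F(t)$ is countable (being an algebraic extension of the countable field $\F(t)$, and $\F$ is finite), hence the set of such $h$ is countable, and therefore the set of points $h\Gamma$ arising this way is countable. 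Since every compact $A$-orbit contains such a point, there are only countably many compact $A$-orbits.

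First I would record the countability of $\overline{\F(t)} \cap \K$: the field $\F$ is finite, so $\F(t)$ is countable, and any algebraic closure (or the algebraic closure inside $\K$) is a countable union over $n$ of the finite-degree subextensions, each of which is a finitely-dimensional $\F(t)$-vector space and hence countable. Next I would observe that the set $\mathcal{H}$ of matrices in $G = \SL_2(\K \times \K)$ all of whose entries (viewed in the two $\K$-coordinates) are algebraic over $\F(t)$ is countable, being a subset of a finite product of copies of the countable set $\overline{\F(t)} \cap \K$. By the first part of Lemma \ref{lem;algebraic}, the map $\mathcal{H} \to G/\Gamma$, $h \mapsto h\Gamma$, has image containing every point $x$ with $Ax$ compact; since the domain is countable, the set of all such $x$ is countable, and in particular the collection of compact $A$-orbits (each being $Ax$ for such an $x$) is countable.

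This is essentially a one-line deduction, so there is no serious obstacle; the only point requiring a word of care is that "compact $A$-orbit" should be interpreted so that it indeed falls under the hypothesis of Lemma \ref{lem;algebraic} (i.e., $Ax$ compact as a subset of $G/\Gamma$, with $x$ any point of the orbit), and that distinct compact orbits are disjoint, so counting base points $x$ bounds the number of orbits. I would phrase the proof as follows.

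\begin{proof}[Proof of Corollary \ref{cor;countable}]
Since $\F$ is a finite field, $\F(t)$ is countable, and hence the subfield $\overline{\F(t)} \cap \K$ of elements of $\K$ algebraic over $\F(t)$ is countable, being a countable union of finite-dimensional $\F(t)$-vector spaces. Consequently the set $\mathcal H$ of those $h \in G = \SL_2(\K \times \K)$ all of whose entries, in each of the two $\K$-coordinates, are algebraic over $\F(t)$ is countable. If $Ax$ is compact for some $x \in G/\Gamma$, then by the first part of Lemma \ref{lem;algebraic} there exists $h \in \mathcal H$ with $x = h\Gamma$; thus the set of all such $x$ is contained in the countable set $\{ h\Gamma : h \in \mathcal H\}$. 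Since distinct $A$-orbits are disjoint, it follows that there are only countably many compact $A$-orbits on $G/\Gamma$.
\end{proof}
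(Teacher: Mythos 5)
Your proof is correct and is exactly the argument the paper intends: the paper states the corollary as an immediate consequence of the first part of Lemma \ref{lem;algebraic}, and the only ingredient you add — countability of the algebraic closure of $\F(t)$ inside $\K$ — is precisely what makes that deduction immediate. No issues.
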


\begin{lem}\label{lem;nonper}
 Let  $x\in G/\Gamma$ be such that 
$Ax$ is compact.
Let $g_0$ be as in Lemma \ref{lem;use}.  Then for any $a\in A$ and $$h=
\begin{pmatrix}
    * &  (\theta , *) \\
    * & *
\end{pmatrix}
\in G$$ with 
$ag_0\Gamma= hx$, one has $\theta\neq 0$.
\end{lem}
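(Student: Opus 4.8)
The plan is to argue by contradiction. Suppose that for some $a \in A$ and some $h = \begin{pmatrix} * & (0,*) \\ * & * \end{pmatrix} \in G$ we have $ag_0\Gamma = hx$ for some $x \in G/\Gamma$; I want to derive that $\til\alpha$ is algebraic over $\F(t)$, contradicting Lemma~\ref{lem;use}. Writing $g_0 = u(\til\alpha, 0)$ and letting $b = a(\alpha_1,\alpha_2)$ be the $A$-component of $a$, the element $a g_0 = a(\alpha_1,\alpha_2) u(\til\alpha,0)$ has the shape $\begin{pmatrix} (\alpha_1, \alpha_2) & (\alpha_1 \til\alpha, 0) \\ 0 & (\alpha_1^{-1}, \alpha_2^{-1}) \end{pmatrix}$ in $M_2(\K\times\K)$. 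The hypothesis says this equals $h\gamma$ for some $\gamma \in \Gamma$, where $h$ has vanishing first coordinate in its $(1,2)$-entry. The idea is that $h$, being in $G$, has entries in $\K\times\K$; if all of $h$'s entries were algebraic over $\F(t)$ in both coordinates, then comparing the $(1,2)$-entries of $a g_0 = h\gamma$ and using that $\gamma \in \phi_L(\SL_2(\F[t,t\beta]))$ has entries algebraic over $\F(t)$ would force $\alpha_1\til\alpha$ to be algebraic, hence $\til\alpha$ algebraic (one needs $\alpha_1 \ne 0$, which is automatic).

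To make this work I first need to know that $x$ can be taken with algebraic entries, and here is where I would invoke the machinery already set up: the orbit $A x = A a g_0 \Gamma$ equals $A g_0 \Gamma$, which is relatively compact by Lemma~\ref{lem;use}. If $Ax$ were actually \emph{compact}, Lemma~\ref{lem;algebraic} gives $x = h'\Gamma$ with $h' \in G$ having all entries algebraic over $\F(t)$, and then $h' = h \delta$ for a suitable $\delta$ and one runs the entry-comparison above. The genuinely delicate point is that $Ax$ is only known to be \emph{relatively} compact, not compact, so Lemma~\ref{lem;algebraic} does not directly apply to $x$ itself. I expect the resolution to pass through the structure of the orbit closure: either one argues that a relatively compact $A$-orbit whose stabilizer data is controlled must already be closed (hence compact) — this is precisely the rigidity phenomenon the paper is probing, so it cannot be assumed — or, more likely, one shows directly from $ag_0\Gamma = hx$ with the special form of $h$ that $\til\alpha$ satisfies a polynomial relation over $\F(t)$ by a self-contained computation that does not require knowing $x$ has algebraic entries. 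Concretely, from $h = a g_0 \gamma^{-1}$ one computes the $(1,2)$-entry of $h$ in its first coordinate: it is a ratio of polynomials in $\til\alpha$ with coefficients built from $\alpha_1$ and the (algebraic) entries of $\gamma_0$, and setting it to zero yields the algebraicity of $\til\alpha$.

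The main obstacle is therefore the interplay between ``relatively compact'' and ``compact'': I must avoid assuming that $\overline{Ax}$ is a single closed orbit. My plan to circumvent this is to note that the equation $ag_0\Gamma = hx$ is an exact identity in $G/\Gamma$, so I never actually need to understand $\overline{Ax}$ — I only need the single lift $h$ and a single lift of $x$, together with one element $\gamma$ of $\Gamma$, and $\Gamma$'s entries are algebraic over $\F(t)$ by construction of $\Gamma = \phi_L(\SL_2(\F[t,t\beta]))$. So the argument reduces to: lift $x$ to some $\tilde h \in G$ (arbitrary entries), write $a g_0 \tilde h^{-1} = h\gamma^{-1} \cdot (\text{something})$ — more carefully, from $a g_0 \Gamma = h\tilde h^{-1} \cdot \tilde h\Gamma$ deduce $a g_0 = h \tilde h^{-1} \tilde h \gamma$ is circular, so instead I fix lifts and get $a g_0 = h' \gamma$ with $h' = h\cdot(\text{lift of }x)\cdot\tilde h_0^{-1}$ where $\tilde h_0$ is the chosen lift defining $h$'s domain. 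I would then unwind exactly which matrix equality the statement ``$ag_0\Gamma = hx$'' encodes, pin down that the $(1,2)$-entry of $a g_0$ is $(\alpha_1\til\alpha, 0)$, transport the vanishing of the first coordinate of $h$'s $(1,2)$-entry through multiplication by $\gamma \in \Gamma$, and read off a nontrivial algebraic relation for $\til\alpha$ over $\F(t)$ — the contradiction. I expect the bookkeeping of lifts and the verification that the resulting relation is genuinely nontrivial (not $0 = 0$) to be the only real work.
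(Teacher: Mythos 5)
Your identification of the core computation is correct and is exactly the paper's: $ag_0=a(\alpha_1,\alpha_2)u(\til\alpha,0)$ has $(1,1)$-entry with first coordinate $\alpha_1$ and $(1,2)$-entry with first coordinate $\alpha_1\til\alpha$, so the ratio is the transcendental $\til\alpha$, and once every other factor in $ag_0=h\cdot(\text{lift of }x)\cdot\gamma$ has first coordinates algebraic over $\F(t)$, the vanishing of $\theta$ forces an algebraic relation for $\til\alpha$. The gap is precisely the point you flag and then fail to close: you must know that $x$ admits a lift $h_0\in G$ with all entries algebraic over $\F(t)$, and the ``self-contained computation that does not require knowing $x$ has algebraic entries'' you hope for cannot exist. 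With no hypothesis on $x$ the statement is simply false: take $x=g_0\Gamma$, $a=e$, $h=I$; then $ag_0\Gamma=hx$ and $\theta=0$. Your other route is also blocked by an error: from $ag_0\Gamma=hx$ one only gets $x=h^{-1}ag_0\Gamma$, so $Ax$ is \emph{not} $Ag_0\Gamma$ (and Lemma \ref{lem;use} controls only $A^+g_0\Gamma$ anyway), so you cannot deduce compactness of $Ax$ from Lemma \ref{lem;use}.

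The resolution is that the lemma carries an implicit hypothesis on $x$, visible from its use in the proof of Theorem \ref{eq;thm0}: there $x$ is a point whose $A$-orbit is compact, so the first part of Lemma \ref{lem;algebraic} applies and yields $x=h_0\Gamma$ with all entries of $h_0$ algebraic over $\F(t)$. This is exactly what the paper does. With $h_0$ in hand your bookkeeping closes without difficulty: $ag_0=hh_0\gamma$ for some $\gamma\in\Gamma$, the matrix $(h_0\gamma)^{-1}$ has determinant $1$ and entries whose first coordinates $c_{ij}$ are algebraic over $\F(t)$, and
\[
\theta=\alpha_1 c_{12}+\alpha_1\til\alpha\, c_{22}=\alpha_1\bigl(c_{12}+\til\alpha\, c_{22}\bigr),
\]
with $(c_{12},c_{22})\neq(0,0)$ by invertibility; hence $\theta=0$ would make $\til\alpha=-c_{12}/c_{22}$ algebraic over $\F(t)$, contradicting Lemma \ref{lem;use}. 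So the proposal is salvageable only after importing the compactness hypothesis on $x$ and Lemma \ref{lem;algebraic}; as written it does not constitute a proof.
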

\begin{proof}
By Lemma \ref{lem;algebraic} there exists
  $h_0\in \Gamma$ whose entries in $\K$ are all algebraic over 
 $\F(t)$ such that 
 $x=h_0\Gamma$.
 Since $ag_0\Gamma= hh_0\Gamma$, there exists $\gamma\in \Gamma$ such that 
$$ag_0 = hh_0\gamma= \begin{pmatrix}
    (\theta_1, * ) & (\theta_2, *)\\
    * & *
\end{pmatrix} .$$
Therefore, $\frac{\theta_2}{\theta_1}=\til \alpha$ is transcendental over $\F(t)$.
In particular, $\theta \neq 0$. 
\end{proof}

Let $\pi_i: A\to \K^*$ be the map defined by $\pi_i(a(\alpha_1, \alpha_2))=\alpha_i$ where $a(\alpha_1, \alpha_2)$ is defined as in (\ref{eq;notation}).
We say  a sequence $\{ a_n \}\subset A^+$ is  divergent if
\[
\lim_{n\to \infty} |\pi_1 (a_n)\pi_2(a_n)^{-1}|= \lim_{n\to \infty}|{\pi_2(a_n)}| =\infty.
\]
Let \[
\mathcal L=\{x\in G/\Gamma: x= \lim_{n\to \infty} a_n g_0\Gamma 
\mbox{ for some  divergent  sequence } \{a_n \}\subset A^+ \}.
\]
Since $A_0^+g_0\Gamma$ is relatively compact, the set  $\mathcal L$ is nonempty and compact. 

\begin{lem}\label{bounded}
    For any $x\in \mathcal L$ we have  \( Ax \subset \mathcal L\). Hence $Ax$ is relatively compact in \( G/\Gamma \).
\end{lem}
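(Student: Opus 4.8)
The plan is to show $\mathcal L$ is $A$-invariant; once that is done, boundedness of $Ax$ for $x\in\mathcal L$ follows because $\mathcal L$ itself is compact (as noted just before the lemma). So the real content is the inclusion $Ax\subset\mathcal L$ for $x\in\mathcal L$. Fix $x=\lim_{n\to\infty}a_ng_0\Gamma$ with $\{a_n\}\subset A^+$ divergent, and fix $b\in A$; I want to realize $bx$ as a limit $\lim_{n\to\infty}a_n'g_0\Gamma$ along some divergent sequence $\{a_n'\}\subset A^+$. The natural guess is $a_n'=ba_n$, so that $a_n'g_0\Gamma=ba_ng_0\Gamma\to bx$ by continuity of the $A$-action. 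Two things must be checked: (i) $ba_n\in A^+$ for all large $n$, and (ii) $\{ba_n\}$ is divergent in the sense defined (both $|\pi_1(ba_n)\pi_2(ba_n)^{-1}|\to\infty$ and $|\pi_2(ba_n)|\to\infty$).

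For (ii), write $b=a(\gamma_1,\gamma_2)$; then $\pi_i(ba_n)=\gamma_i\pi_i(a_n)$, so $|\pi_2(ba_n)|=|\gamma_2|\,|\pi_2(a_n)|\to\infty$ and $|\pi_1(ba_n)\pi_2(ba_n)^{-1}|=|\gamma_1\gamma_2^{-1}|\,|\pi_1(a_n)\pi_2(a_n)^{-1}|\to\infty$ since the constants $|\gamma_i|$ are fixed and nonzero. So divergence is automatic and costs nothing. The subtlety is (i): $A^+$ is the set of $a(\alpha_1,\alpha_2)$ with $|\alpha_1|\ge|\alpha_2|\ge 1$, and multiplying by a fixed $b$ need not preserve these inequalities for an arbitrary $b$ — it only preserves them up to bounded error. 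Concretely $|\pi_2(ba_n)|=|\gamma_2|\,|\pi_2(a_n)|\ge 1$ holds once $|\pi_2(a_n)|\ge|\gamma_2|^{-1}$, which is eventually true by divergence; and $|\pi_1(ba_n)|\ge|\pi_2(ba_n)|$ rearranges to $|\gamma_1\gamma_2^{-1}|\,|\pi_1(a_n)\pi_2(a_n)^{-1}|\ge 1$, again eventually true by divergence of the first quantity. Hence for all sufficiently large $n$ we do have $ba_n\in A^+$. Since $\mathcal L$ only depends on the tail of the sequence, dropping finitely many terms is harmless, and we conclude $bx\in\mathcal L$.

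I would then record the consequence: $\mathcal L$ is an $A$-invariant compact subset of $G/\Gamma$ (nonempty by the remark preceding the lemma, using that $A^+g_0\Gamma$ is relatively compact, which is Lemma \ref{lem;use}), so for every $x\in\mathcal L$ the orbit $Ax$ is contained in the compact set $\overline{\mathcal L}=\mathcal L$ and is therefore bounded. I do not expect a genuine obstacle here; the only point requiring a little care is the eventual membership $ba_n\in A^+$, i.e.\ making explicit that the defining inequalities of $A^+$ survive left translation by a fixed element once $n$ is large, which is exactly where divergence of $\{a_n\}$ (rather than mere unboundedness) is used. A secondary small point is to make sure the notion ``divergent sequence in $A^+$'' is used consistently with the definition given before the lemma, so that the limit genuinely lands in $\mathcal L$ as defined.
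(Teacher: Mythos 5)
Your proof is correct and follows essentially the same route as the paper: translate the approximating divergent sequence by the fixed element of $A$, observe that divergence forces the translated sequence to lie in $A^+$ eventually and to remain divergent, and conclude by continuity of the action together with compactness of $\mathcal L$. The paper merely asserts the eventual membership $a_na\in A^+$ in one line, whereas you spell out the inequality checks; no difference in substance.
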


\begin{proof}
    Suppose $a\in A$ and  $x=\lim_{n} a_n g_0 \Gamma$ for some divergent sequence $\{a_n \}$ of $A^+$. 
   By the definition of the divergent sequence, we have $a_n a\in A^+$ for 
   $n$ sufficiently large.
Therefore
\begin{align*}
        ax
        = a \lim_n a_{n}g_0\Gamma 
        = \lim_n (aa_n)g_0\Gamma \in \mathcal L.
    \end{align*}
\end{proof}

\begin{thm}\label{eq;thm0}
    There exists \(y\in \mathcal L \) such that \( A y \) is a non-closed relatively compact subset of $G/\Gamma$.
\end{thm}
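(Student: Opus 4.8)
The plan is to show that not every $A$-orbit contained in $\mathcal L$ is closed; since $\mathcal L$ is compact, any such non-closed orbit is automatically relatively compact, which is what is wanted. So suppose for contradiction that every $A$-orbit in $\mathcal L$ is closed. As $\mathcal L$ is compact and, by Lemma \ref{bounded}, $A$-invariant, each such orbit is then compact. By Corollary \ref{cor;countable} there are only countably many compact $A$-orbits, so $\mathcal L$ is a countable union of compact $A$-orbits; being a nonempty compact metric space it is a Baire space, hence one of them, say $Ax_0$, has nonempty interior $V$ in $\mathcal L$. Every point of $\mathcal L$ is, by definition, a limit $\lim_n a_ng_0\Gamma$ along a divergent sequence $\{a_n\}\subseteq A^+$; so, replacing $x_0$ by a point of $V$, I may assume $x_0\in V$ and $x_0=\lim_n a_ng_0\Gamma$ with $\{a_n\}$ divergent.

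Next I would make the trajectory hug the compact orbit. Since $a_ng_0\Gamma\to x_0\in Ax_0$ and $\textup{Stab}_A(x_0)$ is cocompact in $A$, pick $b_n$ in a fixed compact fundamental domain for $\textup{Stab}_A(x_0)$ in $A$ with $d(a_ng_0\Gamma,\,b_nx_0)\to0$; then $c_n:=b_n^{-1}a_n$ is again divergent and $c_ng_0\Gamma\to x_0$. By Lemma \ref{lem;algebraic} write $x_0=h_0\Gamma$ with $h_0\in G$ having all entries algebraic over $\F(t)$, so there are $\gamma_n=\phi_L(\Gamma_n)\in\Gamma$, $\Gamma_n\in\SL_2(\F[t,t\beta])$, with $c_ng_0\gamma_n\to h_0$. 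Identifying $\F[t,t\beta]$ with the Laurent polynomial ring $\F[\theta,\theta^{-1}]$, $\theta=\tfrac{\beta+1}{\beta-1}$, and comparing the two coordinates of $c_ng_0\gamma_n\to h_0$ — which are the completions at the two places of $L$ above $\infty$, where $|\theta|=3$ and $|\theta|=3^{-1}$ respectively — one reads off that the entries of $\Gamma_n$, viewed as Laurent polynomials in $\theta$, must have top degree growing like $\log|\pi_1(c_n)|$ and bottom degree decreasing like $-\log|\pi_2(c_n)|$.

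The last step — which I expect to be the main obstacle — is to turn this into a contradiction. The key device is Lemma \ref{lem;nonper}: applied with $x=x_0$, the diagonal element $c_n$, and $h_n:=c_ng_0\gamma_nh_0^{-1}\to I$, it says the first coordinate of $h_n$ is never lower triangular, i.e. the trajectory cannot approach $Ax_0$ along the stable horospherical of $c_n$. Combined with the transcendence of $\til\alpha$ (Lemma \ref{lem;use}) — which makes the off-diagonal entry of $c_ng_0$ in the first coordinate transcendental over $L$, hence incompatible with being matched by the algebraic matrix $h_0$ — and with the degree constraints above and the identity $\det\Gamma_n=1$, this should be impossible. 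The resulting contradiction shows that some $A$-orbit $Ay$ with $y\in\mathcal L$ is non-closed, and it is relatively compact because $\mathcal L$ is compact.
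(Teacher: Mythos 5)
Your reduction to ``not every $A$-orbit in $\mathcal L$ is compact'' is sound and matches the paper's starting point, but the Baire-category step buys you nothing (the paper simply fixes any $x\in\mathcal L$ and assumes $Ax$ is compact), and the final step of your argument --- the only step that actually matters --- is a genuine gap, not just a technical obstacle. You are trying to derive a contradiction from the mere fact that a divergent trajectory $a_ng_0\Gamma$ accumulates on a compact orbit $Ax_0$. There is no such contradiction to be had: nothing in Lemma \ref{lem;nonper}, in the transcendence of $\til\alpha$, or in degree bounds on the entries of $\gamma_n$ rules this out. Lemma \ref{lem;nonper} only asserts that the off-diagonal entry $\theta_n$ of $h_n$ (where $a_ng_0\Gamma=h_nx_0$, $h_n\to I$) is \emph{nonzero}; it says the approach to $Ax_0$ is never purely along the lower-triangular direction, which is an ingredient to be used positively, not an obstruction to convergence. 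Your degree-growth analysis of $\Gamma_n$ is likewise consistent with convergence and leads nowhere.

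What the paper actually does with these ingredients is different and is the missing idea. Since $\theta_n\neq 0$, one can choose $b_n=a(\alpha_n,1)\in A^+$ renormalizing so that $b_nh_nb_n^{-1}\to u(\theta',0)$ with $\theta'\neq 0$. Using cocompactness of $\textup{Stab}_A(x_0)$ in $A$, write $b_n=c_nf_n$ with $c_n\in\textup{Stab}_A(x_0)$ and $f_n$ in a fixed compact set; then $c_na_n$ is still a divergent sequence in $A^+$ and $c_na_ng_0\Gamma=c_nh_nc_n^{-1}x_0\to u(\theta,0)x_0$, so the \emph{new} point $u(\theta,0)x_0$ lies in $\mathcal L$. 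By Lemma \ref{bounded} and the second part of Lemma \ref{lem;algebraic}, the closure of $\{bu(\theta,0)b^{-1}:b\in\textup{Stab}_A(x_0)\}$ is uncountable and all the corresponding points $hx_0$ lie in $\mathcal L$; comparing with the countability of compact $A$-orbits (Corollary \ref{cor;countable}) produces a $y=hx_0\in\mathcal L$ with $Ay$ non-compact, hence non-closed and bounded. Without this renormalization-plus-uncountability mechanism your proposal does not close.
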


\begin{proof}
    We fix $x\in \mathcal L$. By Lemma \ref{bounded}, 
      \( Ax \) is relatively compact. If \( Ax \) is non-closed, we are done. Now we  assume it is closed and hence compact. 
    It follows that $A/\textup{Stab}_{A}(x)$ is compact. 
    Let $F$ be a compact subset of $A$ such that $A=F\cdot\textup{Stab}_{A}(x)$.

    Let $\{a_n\}$ be a divergent sequence of  $A$ such that 
    $ \lim_{n\to \infty} a_ng_0\Gamma = x.$
    Then there is a sequence $\{ h_n\}\subset G$ such that \[
    a_n g_0\Gamma= h_n x \quad \mbox{and}\quad h_n\to I
    \] 
    where  $I$ is the identity element of $G$.
  Suppose   $  h_n=\begin{pmatrix}
     * & (\theta_{n}, *)\\
    * &    *
    \end{pmatrix}$, 
  then  Lemma \ref{lem;nonper} implies   that $\theta_n\neq 0$ for all $n$. 
So there exists a sequence $b_n=a(\alpha_n, 1)\in A^+$ such that 
\[
b_nh_n b_n^{-1} \to h=u(\theta', 0)
\]
for some nonzero $\theta'\in \K $. 
Since $A=F\cdot \textup{Stab}_{A}(x)$ we can write 
$b_n= c_nf_n$ where $c_n\in \textup{Stab}_{A}(x)$ and $f_n\in F$. 
We assume without loss of generality that $f_n\to f\in F$
  
    Thus, there exists a nonzero  $\theta \in \K$ such that
    \[
    c_na_ng_0 \Gamma= c_n h_n x= c_n h_n c_n^{-1} x\to   u(\theta, 0)x. 
    \]
    Note that $a_nc_n= b_nf_n^{-1}a_n\in A^+$ for $n$ sufficiently large and 
    is divergent. 
    It follows that $u(\theta, 0)x\in \mathcal L$.
    By Lemma \ref{bounded}, we have \( A u(\theta,0) x\subseteq \mathcal L \)  is  bounded.
    
By Lemma \ref{lem;algebraic},     \[
  S:=  \overline{\{ b u(\theta, 0) b^{-1} : b \in \mathrm{Stab}_{A} x \}}
    \] 
    is  uncountable. Note that  $hx\in \mathcal L$ for all $h\in S$.  
But  there are only countably many compact $A$ orbits in $G/\Gamma$ by Corollary \ref{cor;countable}. Therefore, 
    there exists $h\in S$ such that  $ y= hx\in \mathcal L $ such that $Ay $ is non-compact. 
    By Lemma \ref{bounded}, $A y\subset \mathcal L$ and hence is bounded. 
\end{proof}

\begin{proof}[Proof of Theorem \ref{thm;main}]
    By Theorem \ref{eq;thm0} there exists $y\in G/\Gamma$ such that 
    $A y$ is bounded and non-closed. 
 Let $g\in G$ be as in Lemma \ref{embedding} such that 
 $g^{-1} D g=\psi(A Z)$. Then, $Ag \psi(y)$ is a bounded and non-closed subset 
 of $\SL_4(\K)/\SL_4(\F[t])$ by Lemma \ref{embedding}. 
\end{proof}

\bibliographystyle{plain}
\bibliography{refs}

\end{document}